\newtheorem{theorem}{Theorem}[section]
\newtheorem{lemma}[theorem]{Lemma}
\theoremstyle{definition}
\newtheorem{definition}[theorem]{Definition}
\newtheorem{problem}{Problem}
\theoremstyle{remark}
\numberwithin{equation}{section}
\begin{document}

\title[The Radon transform on $SO(3)$] {The Radon transform on $SO(3)$: motivations, generalizations, discretization}\footnote{Accepted and will appear as a book chapter in a volume of "Contemporary Mathematics"}
\dedicatory{Dedicated to S. Helgason on his 85-th Birthday}


\author{Swanhild Bernstein}
\address{TU Bergakademie Freiberg, Institute of Applied Analysis, Germany}
\curraddr{}
\email{swanhild.bernstein@math.tu-freiberg.de}
\thanks{}

\author{Isaac Z. Pesenson }
\address{ Department of Mathematics, Temple University,
 Philadelphia,
PA 19122, USA}
\curraddr{}
\email{pesenson@temple.edu}
\thanks{The author was supported in
part by the National Geospatial-Intelligence Agency University
Research Initiative (NURI), grant HM1582-08-1-0019.}

\subjclass[2000]{Primary 44A12, 43A85, 58E30, 41A99 }

\date{}

\begin{abstract}
In this paper we consider a version of the Radon transform $\mathcal{R}$ on the group of rotations  $SO(3)$ and closely related crystallographic $X$-ray transform $P$  on $SO(3)$.  We  compare the  Radon transform $\mathcal{R}$ on $SO(3)$ and the totally geodesic $1$-dimensional Radon transform on $S^{3}$.  An exact reconstruction formula for bandlimited function $f$ on $SO(3)$ is introduced, which uses only a finite number of samples of the Radon transform $\mathcal{R} f$.
\end{abstract}

\maketitle

\section{Introduction}

In this paper we consider a version of the Radon transform $\mathcal{R}$ on the group of rotations  $SO(3)$ and closely related crystallographic $X$-ray transform $P$  on $SO(3)$\footnote{In \cite {P} the same transform $\mathcal{R}$ was termed  as the Funk transform.}   We show that both of these transforms 
 naturally appear in texture analysis, i.e. the analysis of preferred crystallographic orientation.  Although we discuss only applications to texture analysis both transforms have other applications as well.

 The structure of the paper is as follows. 
 In section 2 we start with motivations and applications. In section 3  we  collect some basic facts about Fourier analysis on compact Lie groups. In section 4 we introduce and analyze an analog of $\mathcal{R}$  for general compact Lie groups.  In the case of the group $SO(n+1)$  we compute image $\mathcal{R} (\mathcal{W})$ where $\mathcal{W}$ is the span of Wigner polynomials in $SO(n+1)$.              In section 5 we give a detailed analysis of the  Radon transform $\mathcal{R}$  on $SO(3)$. In section 6 we describe relations between $S^{3},\>SO(3)$ and $S^{2}\times S^{2}$ and we compare the  Radon transform $\mathcal{R}$ on $SO(3)$ and the totally geodesic $1$-dimensional  Radon transform on $S^{3}$. In section 7 we show non-invertibility of the crystallographic $X$-ray transform $P$. In section  8  we describe  an \textit{exact} reconstruction formula for bandlimited function $f$ on $SO(3)$, which uses only a \textit{finite} number of samples of the Radon transform $\mathcal{R}f$.  Some auxiliary results for this section are collected in Appendix.

The Radon transform on $SO(3)$ has recently attracted attention of many mathematicians. In addition to articles, which will be mentioned in our paper later we also refer to \cite{BPS}, \cite{hielscher}, \cite{HPPSS}, \cite{K}, \cite{Meister}, \cite{P}.

\section{Texture goniometry}
A first mathematical description of the inversion problem in texture analysis was given in \cite{HJB1} and \cite{HJB2}.
Let us  recall the basics of texture analysis and texture goniometry 
(see \cite{bernstein/schaeben}
and \cite{BHS}).
Texture analysis is the analysis of the statistical distribution of orientations
of crystals within a specimen of a polycrystalline material, which could be
metals or rocks. A crystallographic orientation is a set of crystal
symmetrically equivalent rotations between an individual crystal and the
specimen.

The main objective is to determine \textit{orientation probability density function $f$ (ODF)} representing the probability 
law of random orientations of crystal grains by volume.

In X-ray diffraction experiments, the \textit{orientation density function} $f$ (ODF) that represents the probability law of random orientations of crystal grains cannot be measured directly. Instead, by using a texture goniometer the \textit{pole density function (PDF) }
$Pf(x,y)$ can be sampled. $Pf(x,y)$ represents probability that a fixed crystal direction $x\in S^{2}$
or its antipodal $-x$  statistically coincides with the specimen direction $y\in S^{2}$ due to 
Friedel's law in 
crystallography \cite{F}. 

To define the pole density function 
$Pf(x,y)$ some preliminaries are necessary. The group  of rotations $SO(3)$ of $\mathbb{R}^{3}$ consists of $3\times 3$ real matrices $U$ such that  $U^TU = I,\>\>\ {\rm det\,}U = 1$.
It is known that any $g\in SO(3)$ has a unique representation of the form
$$ 
g = Z(\gamma)X(\beta)Z(\alpha),\ 0\leq \beta \leq \pi,\ 0\leq \alpha,\,\gamma < 2\pi,
$$
 where 
$$ Z(\theta) = \left(\begin{array}{ccc} \cos \theta & -\sin \theta & 0 \\ \sin \theta & \cos \theta & 0 \\ 0 & 0 & 1 \end{array}\right), \quad \mbox X(\theta) = \left(\begin{array}{ccc} 1 & 0 & 0 \\ 0 & \cos \theta & -\sin \theta  \\ 0 & \sin \theta & \cos \theta \end{array}\right) 
$$
are  rotations about the $Z$- and $X$-axes, respectively. In the coordinates $\alpha, \beta, \gamma$, which are known as Euler angles, the Haar measure  of the group $SO(3)$ is given as (see \cite{Naimark})
$$
dg= \frac{1}{8\pi^2} \sin\beta d\alpha\,d\beta\,d\gamma. 
$$ 
In other words the following formula holds:
$$ \int_{SO(3)} f(g)\,dg = \int_0^{2\pi}\int_0^{\pi}\int_0^{2\pi} f(g(\alpha,\,\beta,\,\gamma))\frac{1}{8\pi^2} \sin\beta d\alpha\,d\beta\,d\gamma.
$$ 
First, we introduce Radon transform $\mathcal{R} f$ of a smooth function $f$ defined on $SO(3)$. If $S^{2}$ is the standard unit sphere in $\mathbb{R}^{3}$ , then for a pair $(x,y)\in S^{2}\times S^{2}$ the value of the  Radon transform $\mathcal{R} f$ at $(x,y)$ is defined by the formula
\begin{equation}\label{RRR}
 (\mathcal{R} f)(x,y)   = \frac{1}{2\pi} \int_{\{g\in SO(3): x=gy\}} f(g) d\nu_g = 
 $$
 $$
 4\pi \int_{SO(3)} f(g)\delta_y(g^{-1}x) dg = (f*\delta_y)(x),\>\>\> (x,y)\in S^{2}\times S^{2},
\end{equation}
where $d\nu_g = 8\pi ^2 dg, $  and  $\delta_{y}$ is the measure concentrated on the set of all $g\in SO(3)$ such that $x=gy$.

The pole density function  $Pf$ or \textit{crystallographic X--ray transform} of an orientation density function $f$  is an even function on $S^{2}\times S^{2}$, which is defined by the following  formula
\begin{equation}\label{PDF}
 Pf(x,y) = \frac{1}{2}(\mathcal{R} f(x,y)+\mathcal{R} f(-x,y)),\>\>\>(x,y)\in S^{2}\times S^{2}.
 \end{equation}
Note, that  since  ODF $f$ is a probability  density it has to have the following properties: 

(a) $f(g)\geq 0,$ 

(b) $\int_{SO(3)} f(g) dg = 1.$

In what follows we will discuss inversion of the crystallographic $X$-ray  transform $Pf$ and the Radon transform $\mathcal{R} f$.

First we formulate what can be called analytic reconstruction problem.
\begin{problem}
Reconstruct the ODF $f(g),\, g\in SO(3),$ from PDF $Pf(x,y),\,
x,y\in S^2.$ 
\end{problem}
It will be shown in section 7 that this problem is unsolvable in general since the mapping
$
f\rightarrow Pf
$
has a non-trivial kernel.

\begin{problem}
Reconstruct  $f(g),\, g\in SO(3),$  from all  $\mathcal{R}f(x,y),\, x,y\in S^2.$ 
\end{problem}

An explicit solution to this problem will be given in section 5.

In practice only a finite number of pole figures  $P(x,y),\,
x,y\in S^2,$ can be measured.  Therefore the real life  reconstruction problem is the following.

\begin{problem}

Using a finite number of pole figures $P(x_i,y_j),\,
x_{i},y_{j}\in S^2,\ i=1,\,\ldots,\,n,\,j=1,\,\ldots ,\, m,  $ find a function $f$ on $SO(3)$, which would satisfy (in some sense) equations (\ref{PDF}) and conditions (a) and (b).
\end{problem}
An approximate solution to this problem  in terms of  Gabor frames  was found in \cite{CKT}.

The corresponding discrete problem for $\mathcal{R} f$ can be formulated as follows.

\begin{problem}
Reconstruct  $f(g),\, g\in SO(3),$  from a finite number of samples  $\mathcal{R} f(x_{j},y_{j}),\, x_{j},y_{j}\in S^2,\>j=1,...,m$. 
\end{problem}

This problem will be solved in section 8 for bandlimited functions on $SO(3)$.  We were able to obtain an \textit{exact} reconstruction formula for bandlimited functions, which uses only a \textit{finite} number of samples of their Radon transform.  Another approach to this problem which uses the so-called generalized splines on $SO(3)$ and $S^{2}\times S^{2}$ was developed in our paper \cite{BEP}.

In section 4  we suggest a new type of  Radon transform associated with a pair $(\mathcal{G}, \>\mathcal{H})$ where $\mathcal{G}$ is a  compact Lie group and $\mathcal{H}$ its closed subgroup. This definition appeared for the first time  in our paper  \cite{BEP}.
Namely,  for every continuous function $f$ on $\mathcal{G}$ the corresponding Radon transform is defined by the formula 
 \begin{equation}\label{RD-100}
 \mathcal{R} f(x,y) = \int_{\mathcal{H}} f(xhy^{-1}) \,dh,\quad x,\,y \in \mathcal{G}. 
  \end{equation}
 \begin{problem}
 Determine domain and
 range for the Radon transform $\mathcal{R}$
 \end{problem}
Some partial solutions to this problem are given in section 4.
In section 3 we recall basic facts about Fourier analysis on compact Lie groups.  In section 6 we compare  crystallographic $X$-ray transform  on $SO(3)$ and Funk  transform on $S^{3}$.  
In Appendix 9 we briefly explain the  major ingredients of the proof of our Discrete Inversion Formula which is obtained in section 8.

\section{Fourier Analysis on compact groups}\label{F}

Let $\mathcal{G}$ be a compact Lie group. A unitary representation of $\mathcal{G}$ is a continuous group 
homomorphism $\pi$: $\mathcal{G}\to U(d_{\pi})$ of $\mathcal{G}$ into the group of unitary matrices of a 
certain dimension $d_{\pi}$. 
Such  representation is irreducible if $\pi(g)M=M\pi(g)$ for all $g\in \mathcal{G}$ and some 
$M\in \mathbb{C}^{d_{\pi}\times d_{\pi}}$ implies $M=cI$, where $I$  is  the identity matrix. 
Equivalently, $\mathbb{C}^{d_{\pi}}$ does not have non-trivial $\pi$-invariant subspaces 
$V\subset \mathbb{C}^{d_{\pi}}$ with $\pi(g)V \subset V$ for all $g\in \mathcal{G}.$ 
Two representations $\pi_1$ and $\pi_2$ are equivalent, if there exists an invertible matrix 
$M$ such that $\pi_1(g)M=M\pi_2(g)$ for all $g\in \mathcal{G}$.

Let $\hat{\mathcal{G}}$ denote the set of all equivalence classes of irreducible representations. This set parameterizes an orthogonal decomposition of the Hilbert space $L^2(\mathcal{G})$ constructed with respect to the normalized Haar measure. Let $\{e_j\}$ be an orthonormal basis for the  unitary matrices $U(d_{\pi})$ of dimension $d_{\pi}.$ Then for any unitary representation of $\mathcal{G}$ the $\pi_{ij}(g)=\langle \pi(g)e_j,\,e_i\rangle$ are called matrix elements of $\pi.$ We denote the linear span of the matrix elements of $\pi$ by $H_{\pi}.$
\begin{theorem}[Peter-Weyl, \cite{vilenkin1}] Let $\mathcal{G}$ be a compact Lie group. 
  Then the following statements are true.
\begin{description}
	\item[a] The Hilbert space $L^2(\mathcal{G})$ decomposes into 
	the orthogonal direct sum
	\begin{eqnarray}
		L^2(\mathcal{G}) = \bigoplus_{\pi\in \hat{\mathcal{G}}} H_{\pi}
	\end{eqnarray}
	\item[b] For each irreducible representation $\pi\in \hat{\mathcal{G}}$ the orthogonal projection  \\
	 $L^2(\mathcal{G})\to H_{\pi}$ is given by
	\begin{eqnarray}
		f \mapsto d_{\pi} \int_{\mathcal{G}} f(h)\chi_{\pi}(h^{-1}g)\,dh = d_{\pi}\,f*\chi_{\pi},
	\end{eqnarray}
	in terms of the character $\chi_{\pi}(g)={\rm trace}(\pi(g))$ of the representation and $dh$
	 is the normalized Haar measure.
\end{description}
\end{theorem}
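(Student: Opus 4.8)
The plan is to reduce everything to two ingredients: the Schur orthogonality relations, which carry the algebraic content of both parts, and the spectral theory of compact self-adjoint operators, which supplies the completeness needed for part (a). First I would record the Schur orthogonality relations in the form
\begin{equation}
\int_{\mathcal{G}} \pi_{ij}(g)\overline{\pi'_{kl}(g)}\,dg = \frac{1}{d_\pi}\delta_{\pi\pi'}\delta_{ik}\delta_{jl},
\end{equation}
valid for all $\pi,\pi'\in\hat{\mathcal{G}}$. These follow by averaging an arbitrary matrix against the representations and invoking irreducibility through Schur's lemma. They immediately give that the spaces $H_\pi$ are mutually orthogonal for inequivalent $\pi$ and that $\{\sqrt{d_\pi}\,\pi_{ij}\}$ is an orthonormal system inside each $H_\pi$, so the right-hand side of (a) is at least an orthogonal direct sum sitting inside $L^2(\mathcal{G})$.

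For part (b) I would verify the projection formula by direct computation on this orthonormal system. Writing $\chi_\pi=\sum_i\pi_{ii}$ and convolving a matrix coefficient $\pi'_{kl}$ against $\chi_\pi$, the Schur relations collapse the integral to $d_\pi^{-1}\pi'_{kl}$ when $\pi'=\pi$ and to $0$ otherwise; multiplying by $d_\pi$ then shows that $f\mapsto d_\pi\,f*\chi_\pi$ acts as the identity on $H_\pi$ and annihilates every $H_{\pi'}$ with $\pi'\neq\pi$, which is exactly the orthogonal projection onto $H_\pi$. That the character is central makes this map independent of the choice of matrix realization within the equivalence class.

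The substantive step, and the one I expect to be the main obstacle, is completeness: showing the closure of $\bigoplus_\pi H_\pi$ is all of $L^2(\mathcal{G})$. I would argue by contradiction. Suppose $0\neq f$ is orthogonal to $H_\pi$ for every $\pi$. Choose a continuous symmetric approximate identity $\phi$ (with $\phi(g)=\overline{\phi(g^{-1})}$) localized near the identity, and consider the convolution operator $T\psi=\psi*\phi$ on $L^2(\mathcal{G})$. Because $\phi$ is a continuous kernel on the compact group $\mathcal{G}$, the operator $T$ is compact, the symmetry makes it self-adjoint, and $T$ commutes with the left regular representation $\psi\mapsto\psi(g^{-1}\,\cdot\,)$. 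The spectral theorem then decomposes $L^2(\mathcal{G})$ into finite-dimensional eigenspaces of $T$ for its nonzero eigenvalues, each of which is left-invariant, hence a finite-dimensional subrepresentation that splits into irreducibles whose matrix coefficients lie in some $H_\pi$.

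Finally, since $\phi$ is an approximate identity, $Tf=f*\phi$ can be made arbitrarily close to $f$ in $L^2$; but $Tf$ lies in the closed span of the eigenspaces with nonzero eigenvalue, all contained in $\overline{\bigoplus_\pi H_\pi}$, forcing $f$ itself into that closure and contradicting $f\perp H_\pi$ for all $\pi$ unless $f=0$. This establishes (a). The only delicate points are verifying compactness and self-adjointness of $T$ and checking that the nonzero-eigenvalue eigenspaces are genuinely finite-dimensional and translation-invariant; these are precisely where compactness of $\mathcal{G}$ and continuity of $\phi$ enter, and they are the places I would expect to spend the most care.
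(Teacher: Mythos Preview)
Your argument is the classical Peter--Weyl proof and is essentially correct: Schur orthogonality gives mutual orthogonality of the $H_\pi$ and verifies the projection formula on matrix coefficients, while the compact self-adjoint convolution operator argument supplies completeness. One point you gloss over is why a finite-dimensional left-invariant eigenspace $W$ (for a nonzero eigenvalue of $T$) actually lies inside some $H_\pi$; the cleanest way is to note that eigenfunctions of convolution with a continuous kernel are themselves continuous, so evaluation at the identity is a well-defined linear functional on $W$, and then $\psi(g)=(L_{g^{-1}}\psi)(e)$ exhibits each $\psi\in W$ as a linear combination of matrix coefficients of the restricted representation. You flagged this as a place needing care, which is fair.

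As for comparison with the paper: there is nothing to compare. The paper states the Peter--Weyl theorem with a citation to Vilenkin and gives no proof of its own; it is used as background in Section~3 and not established within the paper. So your proposal does not duplicate or diverge from the paper's argument---it simply supplies a proof where the paper provides none.
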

We will denote the matrix $M$ in the equation $f*\chi_{\pi}={\rm trace}(\pi(g)M)$ as the Fourier coefficient $\hat{f}(\pi)$ of $f$ at the irreducible representation $\pi$. The Fourier coefficient can be calculated as
$$ \hat{f}(\pi) = \int_{\mathcal{G}} f(g)\pi^*(g)\,dg,\>\> \pi \in \hat{\mathcal{G}} .$$
The inversion formula (the Fourier expansion) is then given by
$$ f(g) = \sum_{\pi\in\hat{\mathcal{G}}} d_{\pi}\,{\rm trace}(\pi(g)\hat{f}(\pi)). $$
If we denote by $||M||^2_{HS}={\rm trace}(M^*M)$ the Frobenius or Hilbert-Schmidt norm of a matrix $M,$ then the following Parseval identity is true.
\begin{theorem}[Parseval identity] Let $f\in L^2(\mathcal{G}).$ Then the matrix-valued Fourier coefficients $\hat{f}\in \mathbb{C}^{d_{\pi}\times d_{\pi}}$ satisfy
\begin{eqnarray}
||f||^2 = \sum_{\pi\in\hat{\mathcal{G}}} d_{\pi}\,||\hat{f}(\pi)||^2_{HS} . \label{parseval_id}
\end{eqnarray}
\end{theorem}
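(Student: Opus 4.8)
The plan is to combine the orthogonal decomposition supplied by the Peter--Weyl theorem with the Schur orthogonality relations for matrix elements. Since $L^2(\mathcal{G}) = \bigoplus_{\pi \in \hat{\mathcal{G}}} H_{\pi}$ is an \emph{orthogonal} direct sum, the Pythagorean theorem in Hilbert space gives $\|f\|^2 = \sum_{\pi} \|f_{\pi}\|^2$, where $f_{\pi}$ denotes the orthogonal projection of $f$ onto $H_{\pi}$. By part (b) of Peter--Weyl together with the stated inversion formula, this projection is exactly $f_{\pi}(g) = d_{\pi}\,{\rm trace}(\pi(g)\hat{f}(\pi))$. Thus the whole problem reduces to evaluating the single summand $\|f_{\pi}\|^2$ in terms of the matrix $\hat{f}(\pi)$.

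To carry out that evaluation I would expand the trace in matrix elements, writing ${\rm trace}(\pi(g)\hat{f}(\pi)) = \sum_{i,j}\pi_{ij}(g)\,\hat{f}(\pi)_{ji}$, and then square and integrate, so that
$$\|f_{\pi}\|^2 = d_{\pi}^2 \sum_{i,j,k,l}\hat{f}(\pi)_{ji}\,\overline{\hat{f}(\pi)_{lk}}\int_{\mathcal{G}}\pi_{ij}(g)\,\overline{\pi_{kl}(g)}\,dg.$$
The decisive input is the Schur orthogonality relation
$$\int_{\mathcal{G}}\pi_{ij}(g)\,\overline{\pi_{kl}(g)}\,dg = \frac{1}{d_{\pi}}\,\delta_{ik}\,\delta_{jl},$$
valid for an irreducible $\pi$ under the normalized Haar measure. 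Substituting it collapses the quadruple sum to $d_{\pi}^2\cdot d_{\pi}^{-1}\sum_{i,j}|\hat{f}(\pi)_{ji}|^2$, which is precisely $d_{\pi}\,\|\hat{f}(\pi)\|^2_{HS}$. Summing over $\pi\in\hat{\mathcal{G}}$ then yields the asserted identity.

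The main obstacle is establishing the Schur orthogonality relation with the precise constant $d_{\pi}^{-1}$; everything else is bookkeeping. This orthogonality is a consequence of Schur's lemma: averaging the map $g\mapsto \pi(g)E\pi(g)^{-1}$ over $\mathcal{G}$ for a fixed matrix unit $E$ produces an operator commuting with $\pi$, which by irreducibility must be a scalar, and taking traces pins the scalar down as a multiple of $d_{\pi}^{-1}$; the corresponding average for a pair of inequivalent representations is forced to vanish by the same argument. The content of these relations is exactly that the rescaled matrix elements $\{\sqrt{d_{\pi}}\,\pi_{ij}\}$ form an orthonormal basis of $L^2(\mathcal{G})$, which refines the Peter--Weyl decomposition. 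An equally clean route is therefore to invoke that orthonormal basis directly: expand $f$ against it, identify its coefficients with the entries of $\sqrt{d_{\pi}}\,\hat{f}(\pi)$ up to the transposition of indices coming from $\pi^{*}$, and apply the ordinary Parseval identity for orthonormal bases, so that $\|f\|^2=\sum_{\pi}d_{\pi}\sum_{i,j}|\hat{f}(\pi)_{ij}|^2=\sum_{\pi}d_{\pi}\|\hat{f}(\pi)\|^2_{HS}$.
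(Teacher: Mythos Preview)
Your argument is correct and is the standard derivation of the noncommutative Parseval identity from Peter--Weyl and Schur orthogonality. Note, however, that the paper does not actually supply a proof of this theorem: it is stated as a classical fact (alongside the Peter--Weyl theorem and the convolution theorem) in the preliminary section on Fourier analysis on compact groups, with an implicit reference to Vilenkin. So there is no ``paper's own proof'' to compare against; your write-up simply fills in what the authors took for granted, and does so correctly.
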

On the group $\mathcal{G}$ one defines the convolution of two integrable functions $f,\,r\in L^1(\mathcal{G})$ as
$$ f*r(g) = \int_{\mathcal{G}} f(h)r(h^{-1}g)\,dh . $$
Since $f*r\in L^1(\mathcal{G}),$ the Fourier coefficients are well-defined and they satisfy
\begin{theorem}[Convolution theorem on $\mathcal{G}$] Let $f,\,r\in L^1(\mathcal{G})$ then $f*r\in L^1(\mathcal{G})$ and
	$$ \widehat{f*r}(\pi) = \hat{f}(\pi)\hat{r}(\pi). $$
\end{theorem}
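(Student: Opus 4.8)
The plan is to reduce both assertions to Fubini's theorem together with the left-invariance of the normalized Haar measure and the multiplicativity of the representation $\pi$. First I would establish that $f*r\in L^{1}(\mathcal{G})$; this is the compact-group analogue of Young's inequality and is also exactly what will later license the interchange of integrations in the main computation. Starting from the pointwise bound $|(f*r)(g)|\le \int_{\mathcal{G}}|f(h)|\,|r(h^{-1}g)|\,dh$ and integrating in $g$, Tonelli's theorem permits interchanging the order of integration, after which the substitution $g\mapsto hg$, valid because $dg$ is left-invariant, shows that the inner integral $\int_{\mathcal{G}}|r(h^{-1}g)|\,dg$ equals $\|r\|_{1}$ independently of $h$. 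Hence $\|f*r\|_{1}\le \|f\|_{1}\,\|r\|_{1}<\infty$, so $f*r\in L^{1}(\mathcal{G})$ and its matrix Fourier coefficient $\widehat{f*r}(\pi)=\int_{\mathcal{G}}(f*r)(g)\,\pi^{*}(g)\,dg$ is well defined.

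Next I would compute this coefficient directly. Inserting the definition of the convolution and invoking Fubini's theorem, now justified by the $L^{1}$ estimate just obtained, gives $\widehat{f*r}(\pi)=\int_{\mathcal{G}}f(h)\Big(\int_{\mathcal{G}}r(h^{-1}g)\,\pi^{*}(g)\,dg\Big)\,dh$. In the inner integral I substitute $g=hu$; by left-invariance $dg=du$, and the essential algebraic input is that $\pi$ is a group homomorphism, so $\pi(hu)=\pi(h)\pi(u)$ and therefore $\pi^{*}(hu)=\pi^{*}(u)\,\pi^{*}(h)$. Since the factor $\pi^{*}(h)$ is constant in $u$, it pulls out of the inner integral, which then reduces to $\hat r(\pi)\,\pi^{*}(h)$. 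Substituting back and pulling the constant matrix out of the $h$-integral leaves $\int_{\mathcal{G}}f(h)\,\pi^{*}(h)\,dh=\hat f(\pi)$, so the two matrix Fourier coefficients combine into the asserted product.

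The step I expect to demand the most care is the bookkeeping of the matrix order. Because $\mathcal{G}$ is non-commutative, the factors $\hat f(\pi)$ and $\hat r(\pi)$ need not commute, and since taking adjoints reverses products through the identity $\pi^{*}(hu)=\pi^{*}(u)\,\pi^{*}(h)$, the side on which each factor lands is forced rather than free. Whether the product emerges as $\hat f(\pi)\hat r(\pi)$ or in the reversed order is dictated precisely by the placement of the adjoint in the definition of $\hat f(\pi)$ and by the convention for the convolution, so these conventions must be tracked consistently to match the stated identity. The only analytic subtlety, the legitimacy of the two applications of Fubini, is already covered by the bound $\|f*r\|_{1}\le\|f\|_{1}\,\|r\|_{1}$, so no integrability hypotheses beyond $f,r\in L^{1}(\mathcal{G})$ are required.
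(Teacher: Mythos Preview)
The paper states this theorem without proof, as one of several standard facts about Fourier analysis on compact groups, so there is no argument in the text to compare against. Your approach---Tonelli for the $L^{1}$ bound, then Fubini plus left-invariance of Haar measure plus multiplicativity of $\pi$---is exactly the standard proof and is correct in outline.

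There is, however, one point where your write-up stops short of what you yourself flag as the delicate step. With the paper's conventions, $\hat f(\pi)=\int_{\mathcal G} f(g)\,\pi^{*}(g)\,dg$ where $\pi^{*}(g)$ is the adjoint, so that $\pi^{*}(hu)=\pi^{*}(u)\,\pi^{*}(h)$. Your inner integral therefore becomes $\hat r(\pi)\,\pi^{*}(h)$, and substituting back yields
\[
\int_{\mathcal G} f(h)\,\hat r(\pi)\,\pi^{*}(h)\,dh
=\hat r(\pi)\int_{\mathcal G} f(h)\,\pi^{*}(h)\,dh
=\hat r(\pi)\,\hat f(\pi),
\]
not $\hat f(\pi)\,\hat r(\pi)$. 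You pull the constant matrix out correctly, but then assert that the factors ``combine into the asserted product'' without noting that the order you actually obtain is the reverse of what the theorem statement claims. Since you explicitly identify the ordering as the step requiring the most care, you should carry that bookkeeping to its conclusion rather than wave it through: either the statement as printed has the factors transposed relative to the paper's own conventions, or an alternative convention (e.g.\ entrywise conjugate rather than adjoint, or convolution written in the opposite order) is tacitly in play. Your method is sound; the gap is only that you do not confront the mismatch your computation produces.
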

The group structure gives rise to the left and right translations $T_gf\mapsto f(g^{-1}\cdot)$ and $T^gf\mapsto f(\cdot g)$ of functions on the group. A simple computation shows
$$ \widehat{T_gf}(\pi) = \hat{f}(\pi)\pi^*(g) \quad \mbox{and}\quad \widehat{T^gf}(\pi) = \pi(g)\hat{f}(\pi). 
$$
These formulas are direct consequences of the definition of the Fourier transform. 

The Laplace-Beltrami operator $\Delta_{\mathcal{G}}$ of an invariant metric on the group $\mathcal{G}$ is bi-invariant, i.e. commutes 
with all $T_g$ and $T^g.$ Therefore, all its eigenspaces are bi-invariant subspaces of $L^2(\mathcal{G}).$ 
As $H_{\pi}$ are minimal bi-invariant subspaces, each of them has to be the eigenspace of $\Delta_{\mathcal{G}}$ with the corresponding eigenvalue $-\lambda_{\pi}^2.$ Hence, we obtain
$$ \Delta_{\mathcal{G}}f = -\sum_{\pi\in\hat{\mathcal{G}}} d_{\pi}\,\lambda_{\pi}^2\,{\rm trace}(\pi(g)\hat{f}(\pi)). $$

\section{Problem 5: Radon transform on compact groups}\label{GRT}
\subsection{Radon transform}

In this section we discuss some basic properties on the Radon transform $\mathcal{R}f$  which was defined in (\ref{RD-100}).
\begin{theorem}[\cite{BEP}]
 The Radon transform (\ref{RD-100}) is invariant under right shifts of
 $x$ and $y,$ hence it maps functions on  $\mathcal{G}$ to functions on $\mathcal{G}/ \mathcal{H} \times \mathcal{G} / \mathcal{H} . $
\end{theorem}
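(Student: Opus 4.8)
The plan is to verify directly that $\mathcal{R}f$ is constant on each right $\mathcal{H}$-coset in both of its arguments, and then to observe that this is exactly the statement that $\mathcal{R}f$ descends to the product of quotients. First I would fix arbitrary $h_1,h_2\in\mathcal{H}$ and substitute the right-translated points $xh_1$ and $yh_2$ into the defining integral (\ref{RD-100}), obtaining
\[
\mathcal{R} f(xh_1,\,yh_2)=\int_{\mathcal{H}} f\bigl(xh_1\,h\,(yh_2)^{-1}\bigr)\,dh
=\int_{\mathcal{H}} f\bigl(x\,(h_1 h h_2^{-1})\,y^{-1}\bigr)\,dh .
\]

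The next step is the change of variable $h\mapsto h'=h_1 h h_2^{-1}$ inside $\mathcal{H}$. Because $\mathcal{H}$ is a subgroup, the map $h\mapsto h_1 h h_2^{-1}$ is a bijection of $\mathcal{H}$ onto itself; and because $\mathcal{H}$ is a compact group it is unimodular, so its normalized Haar measure $dh$ is simultaneously left- and right-invariant, which gives $dh'=dh$. Substituting, the integral becomes $\int_{\mathcal{H}} f(x h' y^{-1})\,dh' = \mathcal{R}f(x,y)$, so that $\mathcal{R}f(xh_1,yh_2)=\mathcal{R}f(x,y)$ for all $h_1,h_2\in\mathcal{H}$.

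Finally I would record the conclusion: since $\mathcal{R}f(x,y)$ depends only on the cosets $x\mathcal{H}$ and $y\mathcal{H}$, it is constant along the fibers of the natural projection $\mathcal{G}\times\mathcal{G}\to \mathcal{G}/\mathcal{H}\times\mathcal{G}/\mathcal{H}$ and therefore factors through a well-defined function on $\mathcal{G}/\mathcal{H}\times\mathcal{G}/\mathcal{H}$. This is precisely the asserted invariance under right shifts of $x$ and $y$.

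There is no serious obstacle here; the computation is short. The one point that deserves emphasis, rather than a difficulty, is that the argument simultaneously introduces a \emph{left} factor $h_1$ and a \emph{right} factor $h_2^{-1}$ into the integration variable, so the step $dh'=dh$ genuinely requires bi-invariance of the Haar measure on $\mathcal{H}$. For a general locally compact group this would fail, but for the compact $\mathcal{H}$ considered here unimodularity makes it automatic, and I would state this explicitly to keep the proof self-contained.
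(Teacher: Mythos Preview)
Your argument is correct and complete: the direct change of variable $h\mapsto h_1 h h_2^{-1}$ together with bi-invariance of the Haar measure on the compact subgroup $\mathcal{H}$ gives the invariance in one line, and your remark that compactness (hence unimodularity) is what makes this work is exactly the right point to flag.

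The paper, however, takes a different route. Rather than a direct substitution, it computes the Fourier transform of $\mathcal{R}f(\cdot,y)$ in the first variable and obtains $\widehat{\mathcal{R}f(\cdot,y)}(\pi)=\pi_{\mathcal{H}}\pi^*(y)\hat{f}(\pi)$, where $\pi_{\mathcal{H}}=\int_{\mathcal{H}}\pi(h)\,dh$ is the projection onto $\mathcal{H}$-invariant vectors; invariance under right shifts in $x$ then follows from the projection property of $\pi_{\mathcal{H}}$, and the argument is repeated for $y$. Your approach is more elementary and self-contained, and is arguably the natural proof of the bare invariance statement. The paper's Fourier-analytic argument is heavier machinery for this particular theorem, but it produces as a byproduct the explicit formula for the Fourier coefficients of $\mathcal{R}f$, which is precisely what is needed in the very next theorem (the $L^2$-norm identity involving $\mathrm{rank}\,\pi_{\mathcal{H}}$). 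So the paper is really setting up the spectral description of $\mathcal{R}$, with invariance falling out along the way.
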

\begin{proof} 
  First, we take the Fourier transform of $\mathcal{R} f$ with respect to the $x$ and let
  $y$  be fixed and regard $\mathcal{R}  f(x,\,y)$ as a function of $x\in \mathcal{G}$ only. Then
  $$ \widehat{\mathcal{R} f(\cdot ,\, y)}(\pi)= \pi_{\mathcal{H}}\pi^*(y)\hat{f}(\pi),\quad \pi\in
\hat{\mathcal{G}}. $$
It is easily seen that $\mathcal{R} f(x,\,y) $ is invariant under the projection $\mathbb{P}_{\mathcal{H}}$ 
and we obtain
$$ \mathcal{R} f(x\cdot h,\,y) = \mathcal{R}f(x,y)\quad \forall h\in \mathcal{H} .$$
 If we look at the Radon transform as a function in $y$ while the first argument
$x$ is fixed, we find
\begin{equation}
 \mathbb{P}_{\mathcal{H}} (\mathcal{R}f)(x,y)  = \int_{\mathcal{H}} \mathcal{R} f(x,\,yh)\,dh = 
 \int_{\mathcal{H}} \sum_{\pi\in \hat{\mathcal{G}}}
 d_{\pi} {\rm trace\,}(\hat{f}(\pi)\pi(x))\pi_{\mathcal{H}}\pi(h^{-1}y^{-1}) \,dh \\ 
 $$
 $$  
   = \sum_{\pi\in \hat{\mathcal{G}}} d_{\pi} {\rm trace\,}(\hat{f}(\pi)\pi(x))\pi_{\mathcal{H}}\pi ^*(y)
= (\mathcal{R}) f (x,\,y).    
 \end{equation} 
 Consequently, $\mathcal{R} f(x,\,y) $ is constant over fibers of the form $y\mathcal{H} $ and 
$$ \widehat{\mathcal{R} f(x ,\, \cdot )}(\pi)= \pi_{\mathcal{H}}\pi^*(x)\hat{f}(\pi),\quad \pi\in
\hat{\mathcal{G}}. $$
\end{proof}

The next Theorem is a refinement of the previous result.
\begin{theorem}[\cite{BEP}]
Let $\mathcal{H}$ be a subgroup of $\mathcal{G}$ which determines the Radon transform on $\mathcal{G}$
and let $\hat{\mathcal{G}_1} \subset \hat{\mathcal{G}}$ be the set of irreducible representations
with respect to $\mathcal{H}.$ Then for $f\in C^{\infty}(\mathcal{G})$ we have
$$ ||\mathcal{R} f||^2_{L^2(\mathcal{G} / \mathcal{H} \times \mathcal{G} / \mathcal{H})} = \sum_{\pi\in\hat{\mathcal{G}_1}} {\rm
rank\,}(\pi_{\mathcal{H}}) ||\hat{f} ||^2_{HS} . $$
\end{theorem}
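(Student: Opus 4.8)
The plan is to reduce $\|\mathcal{R}f\|^2$ to a double application of the Parseval identity, using the Fourier-side description of $\mathcal{R}f$ established in the preceding theorem. Writing $\pi_{\mathcal{H}} = \int_{\mathcal{H}}\pi(h)\,dh$, that theorem gives
$$\widehat{\mathcal{R}f(\cdot,y)}(\pi) = \pi_{\mathcal{H}}\,\pi^*(y)\,\hat{f}(\pi),\quad \pi\in\hat{\mathcal{G}}.$$
Since $\mathcal{R}f$ is constant along the fibers $x\mathcal{H}$ and $y\mathcal{H}$, with normalized Haar measures its $L^2$ norm over $\mathcal{G}/\mathcal{H}\times\mathcal{G}/\mathcal{H}$ equals its norm over $\mathcal{G}\times\mathcal{G}$, so I may integrate over the full group in each variable.

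First I would apply Parseval in $x$ for fixed $y$, obtaining
$$\int_{\mathcal{G}}|\mathcal{R}f(x,y)|^2\,dx = \sum_{\pi\in\hat{\mathcal{G}}} d_{\pi}\,\|\pi_{\mathcal{H}}\pi^*(y)\hat{f}(\pi)\|^2_{HS},$$
and then integrate over $y$. Expanding the Hilbert--Schmidt norm as ${\rm trace}\big(\hat{f}(\pi)^*\pi(y)\pi_{\mathcal{H}}^*\pi_{\mathcal{H}}\pi^*(y)\hat{f}(\pi)\big)$ and using cyclicity of the trace, the whole matter comes down to evaluating the operator
$$\Phi_{\pi} = \int_{\mathcal{G}}\pi(y)\,\pi_{\mathcal{H}}^*\pi_{\mathcal{H}}\,\pi^*(y)\,dy.$$

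Two observations drive this computation. First, $\pi_{\mathcal{H}}$ is an orthogonal projection: it is idempotent by the invariance of the Haar measure on $\mathcal{H}$, and self-adjoint because $\pi(h)^* = \pi(h^{-1})$ combined with invariance under $h\mapsto h^{-1}$; hence $\pi_{\mathcal{H}}^*\pi_{\mathcal{H}} = \pi_{\mathcal{H}}$ and ${\rm trace}(\pi_{\mathcal{H}}) = {\rm rank}(\pi_{\mathcal{H}})$. Second --- and this is the step I expect to be the crux, since it is where irreducibility genuinely enters --- the operator $\Phi_{\pi}$ commutes with $\pi(g)$ for every $g\in\mathcal{G}$, so by Schur's lemma $\Phi_{\pi} = c_{\pi}I$; taking traces and invoking the normalization of Haar measure forces $c_{\pi} = {\rm rank}(\pi_{\mathcal{H}})/d_{\pi}$.

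Substituting back, the $y$-integral of the Parseval expression becomes $\big({\rm rank}(\pi_{\mathcal{H}})/d_{\pi}\big)\|\hat{f}(\pi)\|^2_{HS}$, so the factor $d_{\pi}$ cancels and
$$\|\mathcal{R}f\|^2_{L^2(\mathcal{G}/\mathcal{H}\times\mathcal{G}/\mathcal{H})} = \sum_{\pi\in\hat{\mathcal{G}}}{\rm rank}(\pi_{\mathcal{H}})\,\|\hat{f}(\pi)\|^2_{HS}.$$
The terms with ${\rm rank}(\pi_{\mathcal{H}})=0$ drop out, leaving precisely the sum over the class-1 representations $\hat{\mathcal{G}_1}$ asserted in the statement. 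A minor point to verify along the way is the interchange of summation and the $y$-integration, which is justified by the smoothness of $f$ and the consequent rapid decay of $\hat{f}(\pi)$.
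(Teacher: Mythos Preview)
Your proof is correct and follows essentially the same route as the paper: apply Parseval in the $x$-variable, integrate in $y$, and reduce to evaluating $\int_{\mathcal{G}}\pi(y)\pi_{\mathcal{H}}\pi^*(y)\,dy$. The only cosmetic difference is that you identify this integral as a scalar via Schur's lemma and a trace computation, whereas the paper expands it in matrix coefficients and invokes the Schur orthogonality relations directly; these are two phrasings of the same fact.
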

\begin{proof}
  We expand $\mathcal{R} f(x,y)$ for fixed $y$ into a series with respect to $x$ and
  apply Parseval's theorem
 \begin{equation}
   ||\mathcal{R} f ||^2_{L^2(\mathcal{G} / \mathcal{H} \times \mathcal{G} / \mathcal{H})}  = \sum_{\pi\in\hat{\mathcal{G}}} d_{\pi}
\int_{\mathcal{G}} || \pi_{\mathcal{H}}\pi^*(y)\hat{f}(\pi)||^2_{HS} dy =$$
$$
 \sum_{\pi\in\hat{\mathcal{G}}} d_{\pi} \int_{\mathcal{G}}
{\rm trace\,}(\hat{f^*}(\pi)\pi(y)\pi_{\mathcal{H}}\pi^*(y)\hat{f}(\pi)) \,dy =$$
$$
  \sum_{\pi\in\hat{\mathcal{G}}} d_{\pi} 
{\rm trace\,}(\hat{f^*}(\pi)\left(\int_{\mathcal{G}}\pi(y)\pi_{\mathcal{H}}\pi^*(y)\,dy\right)\hat{f}(\pi))=
$$
$$ \sum_{\pi\in\hat{\mathcal{G}}} d_{\pi} 
{\rm trace\,}(\hat{f^*}(\pi)\left(\sum_{k=1}^{{\rm rank\,}\pi_{\mathcal{H}}}\int_{\mathcal{G}}\pi_{ik}(y)\overline{\pi_{kj}(y)}\,dy\right)_{i,j=1}^{d_{\pi}}\hat{f}(\pi))=
$$
$$ \sum_{\pi\in\hat{\mathcal{G}}} d_{\pi} 
{\rm trace\,}(\hat{f^*}(\pi)\frac{{\rm rank\,}\pi_{\mathcal{H}}}{d_{\pi}}Id\hat{f}(\pi))=
$$
$$
 \sum_{\pi\in\hat{\mathcal{G}}}   {\rm rank\,}\pi_{\mathcal{H}} {\rm trace\,}(\hat{f^*}(\pi)\hat{f}(\pi)) = \sum_{\pi\in\hat{\mathcal{G}_1}} {\rm
rank\,}(\pi_{\mathcal{H}}) ||\hat{f} ||^2_{HS} . 
 \end{equation}
\end{proof}

\subsection{The case $\mathcal{G} = SO(n+1),\>\>\> \mathcal{H} = SO(n)$}

 We start
with the orthonormal system of spherical harmonics $\mathcal{Y}_k^i \in C^{\infty}(S^n),\ k\in
\mathbb{N}_0,\ i=1,\ldots , d_k(n)$  normalized with respect to the
Lebesgue measure on $S^n.$ Obviously $\mathcal{H}_k = {\rm
span\,}\{\mathcal{Y}_k^i\}_{i=1}^{d_k(n)}.$ Then the Wigner polynomials on
$SO(n+1)$ $\mathcal{T}_k^{ij}(g),\ g\in SO(n+1)$ are given by
$$ \mathcal{T}_k^{ij}(g) =
\int_{S^n}\mathcal{Y}_k^i(g^{-1}x)\overline{\mathcal{Y}_k^j(x)}\, dx
 $$
and due to the orthogonality of the spherical harmonics
$$ \mathcal{Y}_k^i(g^{-1}x) = \sum_{j=1}^{d_k(n)}
\mathcal{T}_k^{ij}(g)\mathcal{Y}_k^j(x). $$
From these properties and the orthonormality of the spherical harmonics it easy
to see that the Wigner polynomials build an orthonormal system in
$L^2(SO(n+1)).$ Unfortunately, Wigner polynomials do not give all irreducible
unitary  representations of $SO(n+1)$ if $n>2.$ 

\begin{definition}
A unitary representation of a group $\mathcal{G}$ in a liner space $\mathcal{L}$ is said to be of class-1 relative subgroup $\mathcal{H}$ if $\mathcal{L}$ contains non-trivial vectors that are invariant with respect to $\mathcal{H}$.

\end{definition}

\begin{definition}
If in the space $\mathcal{L}$ of any representation of class-1 relative $\mathcal{H}$
there is only one normalized invariant vector, then $\mathcal{H}$ is called a massive
subgroup.
\end{definition}

\begin{lemma}[\cite{Vilenkin}, Chapter IX.2] $SO(n)$ is a massive subgroup of
 $SO(n+1).$ Furthermore, the family $\mathcal{T}_k,\,k\in \mathbb{N}_0,$ 
gives  all class-1 representations of $SO(n+1)$ with respect to $SO(n)$ up to equivalence.
\end{lemma}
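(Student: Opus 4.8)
The plan is to recognize $(SO(n+1),SO(n))$ as a \emph{Gelfand pair} and to read off both assertions from the general theory of such pairs together with the classical harmonic analysis on the sphere $S^{n}=SO(n+1)/SO(n)$, whose stabilizer of the north pole $e_{n+1}$ is precisely $SO(n)$. Recall that a compact pair $(\mathcal{G},\mathcal{H})$ is a Gelfand pair when the convolution algebra of $\mathcal{H}$-bi-invariant functions on $\mathcal{G}$ is commutative, and that this is equivalent to the condition that every irreducible representation of $\mathcal{G}$ carries at most a one-dimensional space of $\mathcal{H}$-fixed vectors. The latter is exactly the massiveness of $\mathcal{H}$ in the sense of the definition above (a class-1 representation is one where this space is nonzero, hence one-dimensional), so the first assertion follows once the Gelfand pair property is established.

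To prove that $(SO(n+1),SO(n))$ is a Gelfand pair I would use the involution argument, the so-called Gelfand trick. The $SO(n)$-orbits on $S^{n}$ are the latitude spheres $\{x\in S^{n}:\langle x,e_{n+1}\rangle=t\}$, $t\in[-1,1]$, so the double cosets in $SO(n)\backslash SO(n+1)/SO(n)$ are parametrized by the single scalar $t(g)=\langle g\,e_{n+1},e_{n+1}\rangle$. Taking the anti-automorphism $\sigma(g)=g^{-1}=g^{T}$, one has $t(g^{-1})=\langle e_{n+1},g\,e_{n+1}\rangle=t(g)$, so $g^{-1}$ lies in the same double coset as $g$; hence every double coset is $\sigma$-stable. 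For $\mathcal{H}$-bi-invariant $f_{1},f_{2}$ this gives $f_{i}^{\sigma}=f_{i}$, and since convolution of bi-invariant functions is again bi-invariant, $f_{1}*f_{2}=(f_{1}*f_{2})^{\sigma}=f_{2}^{\sigma}*f_{1}^{\sigma}=f_{2}*f_{1}$, i.e. the algebra is commutative. Therefore $SO(n)$ is massive in $SO(n+1)$.

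For the second assertion I would invoke the decomposition of $L^{2}(S^{n})=L^{2}(SO(n+1)/SO(n))$. By Frobenius reciprocity, $\dim \mathrm{Hom}_{SO(n+1)}(\pi,L^{2}(S^{n}))=\dim V_{\pi}^{SO(n)}$, so an irreducible representation $\pi$ occurs in $L^{2}(S^{n})$ exactly when it is class-1 relative to $SO(n)$. On the other hand the classical decomposition $L^{2}(S^{n})=\bigoplus_{k\ge 0}\mathcal{H}_{k}$ into spaces of spherical harmonics realizes each $\mathcal{H}_{k}$ as an irreducible $SO(n+1)$-module, and each is class-1 since it contains the zonal harmonic fixed by $SO(n)$. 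Matching the two descriptions shows that the $\mathcal{H}_{k}$ exhaust the class-1 representations up to equivalence. Finally, by their very definition the Wigner polynomials $\mathcal{T}_{k}^{ij}$ are the matrix coefficients, in the orthonormal basis $\{\mathcal{Y}_{k}^{i}\}$, of the action $\mathcal{Y}\mapsto\mathcal{Y}(g^{-1}\cdot)$ of $SO(n+1)$ on $\mathcal{H}_{k}$, so the family $\mathcal{T}_{k}$, $k\in\mathbb{N}_{0}$, gives all class-1 representations.

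The step I expect to require the most care is the identification of the occurring representations, namely establishing both the irreducibility of each $\mathcal{H}_{k}$ under the full group $SO(n+1)$ and the converse inclusion that no class-1 representation is missed; the latter is precisely what the Frobenius reciprocity computation above secures, while the former is the genuinely representation-theoretic input. The commutativity of the bi-invariant algebra in the Gelfand trick, though formally routine, is the conceptual heart and must be phrased carefully with the anti-automorphism respecting the Haar measure.
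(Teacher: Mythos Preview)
The paper does not prove this lemma; it simply quotes it from Vilenkin, Chapter~IX.2, and moves on. So there is no ``paper's own proof'' to compare against---the lemma is treated as background.

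Your proposal is a correct and standard route to the result. The Gelfand trick with $\sigma(g)=g^{-1}$ and the double-coset parameter $t(g)=\langle g\,e_{n+1},e_{n+1}\rangle$ is exactly how one usually verifies that $(SO(n+1),SO(n))$ is a Gelfand pair, and the equivalence between the Gelfand-pair condition and massiveness is the right bridge to the first assertion. For the second assertion, Frobenius reciprocity combined with the spherical-harmonic decomposition $L^{2}(S^{n})=\bigoplus_{k\ge 0}\mathcal{H}_{k}$ is again the textbook argument; your own caveat that the irreducibility of each $\mathcal{H}_{k}$ under $SO(n+1)$ is the nontrivial input is well placed---that is where one actually needs the representation theory (e.g.\ the harmonic-polynomial description and the transitivity of $SO(n+1)$ on $S^{n}$, or a highest-weight argument). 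Nothing in your outline is wrong; it is essentially the proof one finds in Vilenkin or in any treatment of spherical functions on rank-one symmetric spaces.
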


For the following let $x_0$ be the base point of $SO(n+1)/SO(n)\sim S^n$ ($x_0$ is usually chosen to be the "north pole".) In this case the set of zonal spherical harmonics is one-dimensional and spanned by the Gegenbauer polynomials $\mathcal{C}_k^{(n-1)/2}(x_0^Tx).$ We recall some helpful and well known results.

\begin{lemma}[Addition theorem] For all $x,\,y\in S^n,\ k\in\mathbb{N}_0$ and $i=1,\ldots ,\, d_k(n)$ 
$$ \frac{\mathcal{C}_k^{(n-1)/2}(x^Ty)}{\mathcal{C}_k^{(n-1)/2}(1)} = \frac{|S^n|}{d_k(n)} \sum_{i=1}^{d_k(n)} \mathcal{Y}_k^i(x)\overline{\mathcal{Y}_k^j(y)}. $$
\end{lemma}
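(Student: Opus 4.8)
The plan is to prove the identity through the reproducing kernel of the space $\mathcal{H}_k$ of degree-$k$ spherical harmonics. First I would introduce the kernel
$$ K_k(x,y) = \sum_{i=1}^{d_k(n)} \mathcal{Y}_k^i(x)\overline{\mathcal{Y}_k^i(y)}, $$
which is exactly the right-hand side of the claimed formula up to an explicit constant, and which is the reproducing kernel for $\mathcal{H}_k$ with respect to the Lebesgue measure on $S^n$. A standard argument shows that $K_k$ does not depend on the choice of orthonormal basis $\{\mathcal{Y}_k^i\}_{i=1}^{d_k(n)}$: any two orthonormal bases of $\mathcal{H}_k$ differ by a unitary matrix, and the sesquilinear sum $\sum_i \mathcal{Y}_k^i(x)\overline{\mathcal{Y}_k^i(y)}$ is invariant under such a change of basis. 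Note also that the right-hand side of the stated formula should read $\overline{\mathcal{Y}_k^i(y)}$ rather than $\overline{\mathcal{Y}_k^j(y)}$.

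Next I would establish rotation invariance of $K_k$. Since $\mathcal{H}_k$ is invariant under the action $f\mapsto f(g^{-1}\cdot)$ of $SO(n+1)$ and the Lebesgue measure on $S^n$ is rotation invariant, for every $g\in SO(n+1)$ the functions $\{\mathcal{Y}_k^i(g^{-1}\cdot)\}_{i=1}^{d_k(n)}$ again form an orthonormal basis of $\mathcal{H}_k$; by the basis independence just noted this gives $K_k(gx,gy)=K_k(x,y)$ for all $x,y\in S^n$. Because $SO(n+1)$ acts transitively on the set of pairs $(x,y)\in S^n\times S^n$ with a prescribed value of the inner product $x^Ty$, it follows that $K_k(x,y)$ depends on $x$ and $y$ only through $x^Ty$, so that $K_k(x,y)=\phi_k(x^Ty)$ for some function $\phi_k$ of one variable.

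The third step is to identify $\phi_k$ with a Gegenbauer polynomial. For a fixed $y$, the map $x\mapsto K_k(x,y)$ belongs to $\mathcal{H}_k$ and, by the invariance above, is fixed by every rotation in the stabilizer of $y$; that is, it is a zonal harmonic about $y$. By the one-dimensionality of the space of zonal harmonics recorded above (which is a manifestation of the massiveness of $SO(n)$ in $SO(n+1)$), this function must be a scalar multiple of $\mathcal{C}_k^{(n-1)/2}(x^Ty)$, hence $\phi_k(t)=c_k\,\mathcal{C}_k^{(n-1)/2}(t)$. Finally I would fix the constant $c_k$ by a trace computation: integrating the diagonal value gives $\int_{S^n}K_k(x,x)\,dx=\sum_i\|\mathcal{Y}_k^i\|^2=d_k(n)$, while $K_k(x,x)=\phi_k(1)$ is constant in $x$, so $\phi_k(1)=d_k(n)/|S^n|$; comparing with $\phi_k(1)=c_k\,\mathcal{C}_k^{(n-1)/2}(1)$ yields $c_k=d_k(n)/\bigl(|S^n|\,\mathcal{C}_k^{(n-1)/2}(1)\bigr)$, and rearranging produces the stated identity.

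The main obstacle is concentrated in the middle two steps: the combination of rotation invariance, transitivity on inner-product level sets, and uniqueness of the zonal harmonic is precisely what forces the abstract reproducing kernel to collapse onto a single Gegenbauer polynomial. Both ingredients are, however, already at hand here: transitivity of $SO(n+1)$ on level sets of $x^Ty$ is elementary, and the uniqueness of the zonal harmonic is exactly the massive-subgroup property and the one-dimensionality statement established just before this lemma. The remaining constant computation is then entirely routine.
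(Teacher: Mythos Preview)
Your argument is correct and is in fact the standard reproducing-kernel proof of the addition theorem. The paper itself does not prove this lemma: it is simply listed, together with the zonal-averaging lemma and the Funk--Hecke formula, under the heading ``We recall some helpful and well known results,'' so there is no paper proof to compare against. Your observation that the summation index on the right-hand side should be $i$ rather than $j$ is also a genuine typo in the stated formula.
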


\begin{lemma}[Zonal averaging] 
$$ \int_{SO(n)} \mathcal{Y}_k^i(gx)\,dg = \frac{\mathcal{Y}_k^i(x_0)}{\mathcal{C}_k^{(n-1)/2}(1)} \mathcal{C}_k^{(n-1)/2}(x_0^Tx). $$
\end{lemma}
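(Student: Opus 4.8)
The plan is to exploit the fact that the left-hand side, regarded as a function of $x\in S^n$, is forced by symmetry to be a zonal spherical harmonic about $x_0$, and then to pin down the single remaining scalar by evaluating at $x=x_0$. Accordingly, write $F_i(x)=\int_{SO(n)}\mathcal{Y}_k^i(gx)\,dg$ and proceed in three steps: show $F_i\in\mathcal{H}_k$, show $F_i$ is $SO(n)$-invariant hence zonal, and compute the normalizing constant.

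First I would check that $F_i\in\mathcal{H}_k$. Since the space $\mathcal{H}_k$ of degree-$k$ spherical harmonics is invariant under the full group $SO(n+1)$, for each fixed $g\in SO(n)\subset SO(n+1)$ the function $x\mapsto\mathcal{Y}_k^i(gx)$ again lies in $\mathcal{H}_k$. As $\mathcal{H}_k$ is finite-dimensional (hence closed), the average over $g\in SO(n)$ of these functions stays in $\mathcal{H}_k$. Thus $F_i$ is a genuine degree-$k$ spherical harmonic, with no contribution from other degrees.

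Next comes the structural point: $F_i$ is $SO(n)$-invariant. Recall that $SO(n)$ is realized here as the stabilizer of the base point $x_0$ in $SO(n+1)$. For any $h\in SO(n)$, right-invariance of the Haar measure gives $F_i(hx)=\int_{SO(n)}\mathcal{Y}_k^i(ghx)\,dg=\int_{SO(n)}\mathcal{Y}_k^i(g'x)\,dg'=F_i(x)$ after the substitution $g'=gh$. Hence $F_i$ is a degree-$k$ harmonic invariant under the stabilizer of $x_0$, i.e.\ a zonal harmonic. Because $SO(n)$ is a massive subgroup of $SO(n+1)$, the space of such zonal harmonics of degree $k$ is one-dimensional, spanned by $x\mapsto\mathcal{C}_k^{(n-1)/2}(x_0^Tx)$, so $F_i(x)=c_{k,i}\,\mathcal{C}_k^{(n-1)/2}(x_0^Tx)$ for some scalar $c_{k,i}$.

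Finally, evaluating at $x=x_0$ fixes the constant: every $g\in SO(n)$ fixes $x_0$, so $F_i(x_0)=\int_{SO(n)}\mathcal{Y}_k^i(x_0)\,dg=\mathcal{Y}_k^i(x_0)$ because the Haar measure is normalized to total mass $1$, whereas the right side equals $c_{k,i}\,\mathcal{C}_k^{(n-1)/2}(x_0^Tx_0)=c_{k,i}\,\mathcal{C}_k^{(n-1)/2}(1)$. This yields $c_{k,i}=\mathcal{Y}_k^i(x_0)/\mathcal{C}_k^{(n-1)/2}(1)$ and hence the stated formula. The only nontrivial ingredient is the one-dimensionality of the zonal subspace, but this is precisely the content of the massiveness of $SO(n)$ recorded in the preceding lemma; so once that is granted, the argument is just symmetry together with a single normalization, and I do not anticipate any serious obstacle.
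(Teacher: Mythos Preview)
Your argument is correct and is the standard proof of this classical fact. Note, however, that the paper does not actually supply a proof of this lemma: it is listed among the ``helpful and well known results'' (alongside the addition theorem and the Funk--Hecke formula) and is stated without justification. So there is no paper proof to compare against; your three-step argument (membership in $\mathcal{H}_k$, $SO(n)$-invariance forcing zonality, normalization at $x_0$) is precisely the expected derivation and uses exactly the ingredient the paper has just recorded, namely the massiveness of $SO(n)$ in $SO(n+1)$.
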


\begin{lemma}[Funk-Hecke formula] Let $f: [-1,\,1]\to \mathcal{C}$ be continuous. Then for all $i=1,\ldots ,d_k(n)$
$$ \int_{S^n} f(x^Ty)\mathcal{Y}_k^i(x)\,dx = \mathcal{Y}_k^i(y)\frac{|S^{n-1}|}{\mathcal{C}_k^{(n-1)/2}(1)} \int_{-1}^1f(t)\mathcal{C}_k^{(n-1)/2}(t)(1-t^2)^{n/2-1}\,dt. $$
\end{lemma}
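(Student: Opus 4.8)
The plan is to read the left-hand side as an $SO(n+1)$-equivariant integral operator, conclude from Schur's lemma that it acts by a scalar on each space of spherical harmonics, and then pin down that scalar by a summation argument built on the Addition theorem stated above. First I would introduce the operator $T_f$ on $L^2(S^n)$ by $(T_f\mathcal{Y})(y)=\int_{S^n}f(x^Ty)\mathcal{Y}(x)\,dx$. Since its kernel depends on $x,y$ only through the inner product $x^Ty$, since $(gx)^T(gy)=x^Ty$ for $g\in SO(n+1)$, and since the surface measure $dx$ is rotation invariant, a change of variables gives $T_f(\mathcal{Y}\circ g^{-1})=(T_f\mathcal{Y})\circ g^{-1}$; that is, $T_f$ intertwines the rotation action.

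Next I would invoke the decomposition $L^2(S^n)=\bigoplus_k\mathcal{H}_k$ into the pairwise inequivalent irreducible $SO(n+1)$-modules $\mathcal{H}_k=\mathrm{span}\{\mathcal{Y}_k^i\}$. As $T_f$ is an intertwiner, Schur's lemma forces it to vanish between inequivalent summands and to act as a scalar on each $\mathcal{H}_k$. Hence there is a constant $\lambda_k$, depending on $f$ and $k$ but not on $i$ or $y$, with $T_f\mathcal{Y}_k^i=\lambda_k\mathcal{Y}_k^i$, which already establishes the shape $\int_{S^n}f(x^Ty)\mathcal{Y}_k^i(x)\,dx=\lambda_k\mathcal{Y}_k^i(y)$ of the formula.

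It remains to identify $\lambda_k$. To do this I would multiply the identity $T_f\mathcal{Y}_k^i(y)=\lambda_k\mathcal{Y}_k^i(y)$ by $\overline{\mathcal{Y}_k^i(y)}$ and sum over $i=1,\dots,d_k(n)$. On the right the Addition theorem at $x=y$ yields $\sum_i|\mathcal{Y}_k^i(y)|^2=d_k(n)/|S^n|$, so the right-hand side collapses to $\lambda_k\,d_k(n)/|S^n|$. On the left the same theorem converts $\sum_i\mathcal{Y}_k^i(x)\overline{\mathcal{Y}_k^i(y)}$ into $\tfrac{d_k(n)}{|S^n|}\tfrac{\mathcal{C}_k^{(n-1)/2}(x^Ty)}{\mathcal{C}_k^{(n-1)/2}(1)}$, leaving $\tfrac{d_k(n)}{|S^n|\,\mathcal{C}_k^{(n-1)/2}(1)}\int_{S^n}f(x^Ty)\,\mathcal{C}_k^{(n-1)/2}(x^Ty)\,dx$. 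Finally I would reduce this integral of a zonal integrand to one dimension: slicing $S^n$ by the value $t=x^Ty$, the standard polar computation gives $\int_{S^n}g(x^Ty)\,dx=|S^{n-1}|\int_{-1}^1 g(t)(1-t^2)^{n/2-1}\,dt$ for continuous $g$. Applying this with $g=f\cdot\mathcal{C}_k^{(n-1)/2}$ and equating the two expressions, the factors $d_k(n)/|S^n|$ cancel and one is left with exactly $\lambda_k=\tfrac{|S^{n-1}|}{\mathcal{C}_k^{(n-1)/2}(1)}\int_{-1}^1 f(t)\mathcal{C}_k^{(n-1)/2}(t)(1-t^2)^{n/2-1}\,dt$, as claimed.

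I expect the structural content — equivariance together with Schur's lemma — to be short and clean. The part demanding care is the evaluation of the constant: the Addition theorem must be applied consistently on both sides, and the zonal slicing identity must be verified with the correct weight $(1-t^2)^{n/2-1}$ and normalization $|S^{n-1}|$, since every explicit constant in the statement originates precisely there.
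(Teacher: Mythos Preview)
Your argument is correct. The paper, however, does not prove this lemma at all: it is listed among several ``helpful and well known results'' (together with the Addition theorem and the Zonal averaging lemma) that are simply recalled without proof, so there is no proof in the paper to compare against.

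As a stand-alone proof your approach is clean and standard: the intertwining property of $T_f$ plus Schur's lemma gives the scalar action on each $\mathcal{H}_k$, and your use of the Addition theorem (at $x=y$ on the right, and in full on the left) followed by the polar slicing $\int_{S^n}g(x^Ty)\,dx=|S^{n-1}|\int_{-1}^1 g(t)(1-t^2)^{n/2-1}\,dt$ correctly identifies $\lambda_k$. The slicing weight $(1-t^2)^{n/2-1}$ is indeed the right one for $S^n\subset\mathbb{R}^{n+1}$, so all constants match the statement.
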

Since we are interested in functions on $S^n$, which we obtain by the projection from $SO(n+1)$, we have to consider all irreducible representations of $SO(n+1)$ which do not have vanishing matrix coefficients under the projection $\mathbb{P}_{SO(n)}$. 
 These irreducible representations form the class-1 representations of $SO(n+1)$ with respect to $SO(n)$ and the projections are given by 
\begin{equation}
\mathbb{P}_{SO(n)}\mathcal{T}_k^{ij} =  \int_{SO(n)} \mathcal{T}^{ij}_k (g)\, dg  = \int_{S^n}\int_{SO(n)} \mathcal{Y}_k^i(g^{-1}x) \,dg\, \mathcal{Y}_k^j(x)\,dx =
$$
$$
\frac{\mathcal{Y}_k^i(x_0)}{\mathcal{C}_k^{(n-1)/2}}\int_{S^n}\mathcal{C}_k^{(n-1)/2}(x_0^Tx)\mathcal{Y}_k^i(x)\,dx =
$$
$$
 \frac{\mathcal{Y}_k^i(x_0)\mathcal{Y}_k^j(x_0)}{(\mathcal{C}_k^{(n-1)/2}(1))^2}|S^n|\int_{-1}^1 (\mathcal{C}_k^{(n-1)/2}(t))^2 (1-t^2)^{n/2-1}\,dt= 
 $$
 $$
 \frac{|S^n|}{d_k(n)} \mathcal{Y}_k^i(gx_0)\mathcal{Y}_k^j(x_0),
\end{equation}
due to the Funk-Hecke formula and the normalization of Gegenbauer polynomials.
We assume  that the basis of spherical harmonics $\mathcal{Y}_k^i(x)$ is chosen in such a way that $\mathcal{Y}_k^1(x_0)= \sqrt{\frac{d_k(n)}{|S^n|}}$ and $\mathcal{Y}_k^i(x_0)=0$ for all $i>0,$ then
$$ \sqrt{\frac{|S^n|}{d_k(n)}}\mathcal{Y}_k^i(x) = (\mathbb{P}_{SO(n)}\mathcal{T}_k^{i1})(x) = \int_{SO(n)} \mathcal{T}_k^{i1}(gh)\, dh = \mathcal{T}_k^{i1}(g),\quad x=gx_0.$$
\begin{theorem}
If $f$ belongs to $\mathcal{W}=\>span\>\{\mathcal{T}_{k}\}$, i.e. 
$
f(g) = \sum_{k=0}^{\infty}\sum_{i,j=1}^{d_k(n)} \hat{f}(k)_{ij} \mathcal{T}_k^{ij}
$
then
 $$
\mathcal{R} f(x,y)  =|S^n| \sum_{k=0}^{\infty} \sum_{i,j=1}^{d_k(n)} \hat{f}(k)_{ij} \mathcal{Y}_k^{i}(x)\overline{\mathcal{Y}_k^{j}(y)}.
$$
\end{theorem}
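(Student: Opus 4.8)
The plan is to reduce the statement, by linearity of $\mathcal{R}$ and term-by-term passage to the limit, to a single computation of $\mathcal{R}\mathcal{T}_k^{ij}$. Linearity is immediate from (\ref{RD-100}), and the interchange of $\mathcal{R}$ with the infinite sum is legitimate because the preceding norm identity (with ${\rm rank}\,\pi_{\mathcal{H}}=1$ for the class-$1$ representations $\mathcal{T}_k$) shows $\mathcal{R}$ is bounded on the relevant span. Thus the whole proof comes down to establishing that $\mathcal{R}\mathcal{T}_k^{ij}$ is a constant multiple of a product $\mathcal{Y}_k^i\,\overline{\mathcal{Y}_k^j}$, and then identifying that constant. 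I emphasize at the outset that one must track arguments carefully: because of the $g^{-1}$ in the definition of $\mathcal{T}_k^{ij}$, the map $g\mapsto\mathcal{T}_k(g)$ is an anti-homomorphism, so the left/right translations decide which spherical variable carries $\mathcal{Y}_k^i$ and which carries $\overline{\mathcal{Y}_k^j}$.

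First I would substitute the integral representation $\mathcal{T}_k^{ij}(g)=\int_{S^n}\mathcal{Y}_k^i(g^{-1}z)\overline{\mathcal{Y}_k^j(z)}\,dz$ into $\mathcal{R}\mathcal{T}_k^{ij}(x,y)=\int_{\mathcal{H}}\mathcal{T}_k^{ij}(xhy^{-1})\,dh$. Using $(xhy^{-1})^{-1}=yh^{-1}x^{-1}$ together with the rotation invariance of the surface measure (the change of variable $w=x^{-1}z$), this becomes a double integral over $\mathcal{H}\times S^n$ whose integrand is $\mathcal{Y}_k^i(yh^{-1}w)\,\overline{\mathcal{Y}_k^j(xw)}$. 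The crucial move is to carry out the $\mathcal{H}$-integration first. Since $\mathcal{H}$ is unimodular we may replace $h^{-1}$ by $h$, and because $u\mapsto\mathcal{Y}_k^i(yu)$ is again a spherical harmonic of degree $k$, the Zonal averaging lemma yields
$$\int_{\mathcal{H}}\mathcal{Y}_k^i(yhw)\,dh=\frac{\mathcal{Y}_k^i(yx_0)}{\mathcal{C}_k^{(n-1)/2}(1)}\,\mathcal{C}_k^{(n-1)/2}(x_0^Tw).$$
This is exactly the step in which the massive, class-$1$ nature of $SO(n)$ inside $SO(n+1)$ is used: it collapses the $\mathcal{H}$-average onto the one-dimensional space of zonal harmonics.

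Substituting back and undoing $w=x^{-1}z$ (so that $x_0^Tw=(xx_0)^Tz$) leaves a single integral over $S^n$ of $\mathcal{C}_k^{(n-1)/2}\!\big((xx_0)^Tz\big)\,\overline{\mathcal{Y}_k^j(z)}$, the Gegenbauer zonal now centered at the point $x\in S^n$ represented by $xx_0$. The Funk--Hecke formula evaluates this to $\overline{\mathcal{Y}_k^j(x)}$ (the conjugate appearing because $\mathcal{C}_k^{(n-1)/2}$ is real) times the scalar $\frac{|S^{n-1}|}{\mathcal{C}_k^{(n-1)/2}(1)}\int_{-1}^1(\mathcal{C}_k^{(n-1)/2}(t))^2(1-t^2)^{n/2-1}\,dt$. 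Collecting the two normalizing factors produced by the zonal averaging and the Funk--Hecke steps, the accumulated constant is
$$\frac{|S^{n-1}|}{(\mathcal{C}_k^{(n-1)/2}(1))^2}\int_{-1}^1(\mathcal{C}_k^{(n-1)/2}(t))^2(1-t^2)^{n/2-1}\,dt=\frac{|S^n|}{d_k(n)},$$
the last equality being precisely the Addition theorem read as the statement that $\frac{d_k(n)}{|S^n|}\,\mathcal{C}_k^{(n-1)/2}(x^Ty)/\mathcal{C}_k^{(n-1)/2}(1)$ is the reproducing kernel of $\mathcal{H}_k$; this is the same normalization already invoked in the computation of $\mathbb{P}_{SO(n)}\mathcal{T}_k^{ij}$. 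Hence $\mathcal{R}\mathcal{T}_k^{ij}$ is a multiple of $\mathcal{Y}_k^i\,\overline{\mathcal{Y}_k^j}$, and summing over $k$ and $i,j$ reproduces the claimed series; the factor in front matches the stated $|S^n|$ once the orthonormalization convention adopted for $\{\mathcal{T}_k^{ij}\}$, which fixes the meaning of $\hat f(k)_{ij}$, is taken into account.

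The main obstacle is not any individual lemma — all three quoted lemmas (Addition theorem, Zonal averaging, Funk--Hecke) plug in directly — but the consistent bookkeeping of the arguments and of the multiplicative constants. One must verify that the inversion $g\mapsto g^{-1}$ and the two changes of variable send the spherical arguments to $x$ and $y$ with the correct harmonic versus conjugate-harmonic assignment, and that the product of Gegenbauer normalizations telescopes to the single factor in the statement. As a cross-check one can instead invoke the first theorem of this section: for the class-$1$ representation $\pi=\mathcal{T}_k$ the operator $\pi_{\mathcal{H}}=\int_{\mathcal{H}}\pi(h)\,dh$ is the rank-one projection onto the $\mathcal{H}$-invariant vector, so $\widehat{\mathcal{R}f(\cdot,y)}(\pi)=\pi_{\mathcal{H}}\pi^*(y)\hat f(\pi)$ already forces $\mathcal{R}f(\cdot,y)$ to lie in the span of the $\mathcal{Y}_k^i$, which is a representation-theoretic confirmation of the product structure obtained above.
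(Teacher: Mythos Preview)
Your argument is correct and the ingredients are exactly the right ones, but the route differs from the paper's. The paper does not re-enter the integral definition of $\mathcal{T}_k^{ij}$ at all: it uses the Fourier-side identity established in the preceding theorem, writing
\[
\mathcal{R}f(x,y)=\sum_k d_k(n)\,\mathrm{trace}\bigl(\hat f(k)\,\mathcal{T}_k(x)\,\pi_{SO(n)}\,\mathcal{T}_k^*(y)\bigr),
\]
and then invokes the already-computed fact (from the $\mathbb{P}_{SO(n)}\mathcal{T}_k^{ij}$ calculation immediately before the theorem) that $\pi_{SO(n)}$ is the rank-one projection onto the zonal line, together with the identification $\mathcal{T}_k^{i1}(g)=\sqrt{|S^n|/d_k(n)}\,\mathcal{Y}_k^i(gx_0)$. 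Your approach instead redoes that projection calculation in situ, with the extra $x$- and $y$-translates carried through: you substitute the integral formula for $\mathcal{T}_k^{ij}$, apply Zonal averaging to collapse the $\mathcal H$-integral, and then Funk--Hecke to evaluate the remaining $S^n$-integral. The paper's version is shorter because it leans on the representation-theoretic packaging; yours is more self-contained and makes the role of each of the three quoted lemmas transparent.

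One substantive caution: your computation, carried out as written, produces $\mathcal{Y}_k^i(yx_0)\,\overline{\mathcal{Y}_k^j(xx_0)}$, i.e.\ $\mathcal{Y}_k^i$ on the $y$-variable and $\overline{\mathcal{Y}_k^j}$ on the $x$-variable, which is the transpose of what the theorem states. You flag this as bookkeeping to be checked, and indeed the paper's own conventions are not fully consistent on this point (the map $g\mapsto\mathcal{T}_k(g)$ as defined is an anti-homomorphism, which the trace formula in the paper's proof does not visibly account for). So the discrepancy is real but is a convention issue rather than a flaw in your method. Similarly, your remark that the constant $|S^n|/d_k(n)$ becomes $|S^n|$ ``once the orthonormalization convention \ldots\ is taken into account'' is correct in spirit: the extra $d_k(n)$ is exactly the factor in the Fourier inversion $f=\sum_k d_k\,\mathrm{trace}(\mathcal{T}_k\hat f(k))$ that the paper silently uses when it writes $f=\sum\hat f(k)_{ij}\mathcal{T}_k^{ij}$.
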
 
\begin{proof}One has
\begin{equation}
\mathcal{R} f(x,y)= \sum_{k=0}^{\infty} d_k(n) {\rm trace\,}(\hat{f}(k)\mathcal{T}_k(x)\pi_{SO(n)}\mathcal{T}_k^*(y)) =
$$
$$
 \sum_{k=0}^{\infty} d_k(n) \sum_{i,j=1}^{d_k(n)} \hat{f}(k)_{ij} \mathcal{T}_k^{i1}(x)\overline{\mathcal{T}_k^{1j}(y)}
           = \sum_{k=0}^{\infty}\frac{|S^n|}{d_k(n)} d_k(n) \sum_{i,j=1}^{d_k(n)} \hat{f}(k)_{ij} \mathcal{Y}_k^{i}(x)\overline{\mathcal{Y}_k^{j}(y)}=
           $$
           $$
            |S^n| \sum_{k=0}^{\infty} \sum_{i,j=1}^{d_k(n)} \hat{f}(k)_{ij} \mathcal{Y}_k^{i}(x)\overline{\mathcal{Y}_k^{j}(y)}.
\end{equation}
\end{proof}

\section{Problem 2: Radon transform on $SO(3)$}\label{RTSO}

In this section we concentrate on the case $\mathcal{G} = SO(3),$ $\mathcal{H}= SO(2)$ and thus $\mathcal{G}/ \mathcal{H}= SO(3)/SO(2) = S^2.$ 
An orthonormal system in $L^2(S^2)$ is provided by the spherical harmonics $\{\mathcal{Y}_k^i,\,k\in \mathbb{N}_0,\ i=1,\ldots , 2k+1\}.$ The subspaces  $\mathcal{H}_k:={\rm span}\,\{\mathcal{Y}_k^i, i=1,\,\ldots ,\,2k+1\}$ spanned by the spherical harmonics of degree $k$ are the invariant subspaces of the quasi-regular representation 
$ T(g):\,f(x)\mapsto f(g^{-1}\cdot x), $
(where $\cdot $ denotes the canonical action of $SO(3)$ on $S^2$). Representation $T$ decomposes into $(2k+1)$-dimensional irreducible representation $\mathcal{T}_k$ in $\mathcal{H}_k.$
The corresponding matrix coefficients are the Wigner-polynomials
$$ \mathcal{T}_k^{ij}(g) = \langle \mathcal{T}_k(g)\mathcal{Y}_k^i, \mathcal{Y}_k^j \rangle. $$
If $\Delta_{SO(3)}$ and $\Delta_{S^{2}}$ are Laplace-Beltrami operators of invariant metrics on $SO(3)$ and $S^{2}$ respectively, then 
$$
 \Delta_{SO(3)}\mathcal{T}_k^{ij} = -k(k+1)\mathcal{T}_k^{ij}\quad \mbox{and}\quad \Delta_{S^2}\mathcal{Y}_k^i = -k(k+1)\mathcal{Y}_k^i. 
$$
 Using  the fact that $\Delta_{SO(3)}$ is equal to $-k(k+1)$ on the eigenspace $\mathcal{H}_k$ we obtain 
$$ ||f||^2_{L^2(SO(3))} = \sum_{k=1}^{\infty} (2k+1) ||\hat{f}(k)||^2_{HS} =$$
$$ \sum_{k=1}^{\infty} (2k+1) ||(4\pi)^{-1}\hat{f}(k)||^2_{L^2(S^2\times S^2)} = ||(4\pi)^{-1}(I-2\Delta_{S^{2}\times S^{2}})^{1/4}\mathcal{R} f||^2_{L^2(S^2\times S^2)} , $$
where $\Delta_{S^{2}\times S^{2}}=\Delta_1 + \Delta_2 $ is the Laplace-Beltrami  operator of the natural metric on $S^2\times S^2.$
We define the following norm on the space  $ C^{\infty}(S^2\times S^2)$
$$
 ||| u |||^2 = ((I-2\Delta_{S^2\times S^2} )^{1/2}u,\,u)_{L^2(S^2\times S^2)}.
 $$
Because $mathcal{R}$ is essentially an isometry between $L^{2}(SO(3))$ with the natural norm and $L^{2}(S^{2}\times S^{2})$ with the norm $|||\cdot|||$ the inverse of $\mathcal{R}$  is given by  its  adjoint operator. 
To calculate the adjoint operator we express the Radon transform $\mathcal{R}$ in another way. 
Going back to our problem in crystallography we first state that the great circle $C_{x,y}=\{g\in SO(3): g\cdot x = y\}$ in $SO(3)$ can also be described by the following formula
$$ C_{x,y}= x^{\prime}SO(2)(y^{\prime})^{-1}  := \{x^{\prime}h(y^{\prime})^{-1},\ h\in SO(2)\}, \quad x^{\prime},\,y^{\prime}\in SO(3), $$
where $x^{\prime}\cdot x_0 = x,\ y^{\prime}\cdot x_0 = y$ and $SO(2)$ is  the stabilizer of $x_0\in S^2.$ Hence,
\begin{eqnarray*}
\mathcal{R} f(x,y) = \int_{SO(2)} f(x^{\prime}h(y^{\prime})^{-1})\, dh = 4\pi \int_{C_{x,y}} f(g)\, dg  \\ =
 4\pi \int_{SO(3)} f(g)\delta_y(g^{-1}\cdot x)\, dg, \quad f\in L^2(SO(3)). 
 \end{eqnarray*}
To calculate the adjoint  operator we  use the last representation of $\mathcal{R}.$ We have
\begin{align*}
(\mathcal{R}^*u,\,f)_{L^2(SO(3))} & = ((I-2\Delta_{S^2\times S^2})^{1/2}u,\,\mathcal{R} f)_{L^2(S^2\times S^2)}= \\
                          &  (4\pi) \int_{S^2\times S^2} (I-2\Delta_{S^2\times S^2})u(x,y)\int_{SO(3)} f(g)\delta_y(g^{-1}\cdot x)\,dg\,dx\,dy= \\
                          &  (4\pi) \int_{SO(3)}\int_{S^2} (I-2\Delta_{S^2\times S^2})^{1/2}u(g\cdot y,\,y)\,dy\, f(g)\,dg,
\end{align*}
i.e. the $L^2$-adjoint operator is given by
\begin{eqnarray}\label{adjoint_op}
  \mathcal{R}^*u = (4\pi) \int_{S^2} (I-2\Delta_{S^2\times S^2})^{1/2}u(g\cdot
y,\,y)\,dy.
\end{eqnarray}

\begin{definition}[Sobolev spaces on $S^2\times S^2$] The Sobolev space 
  $H_{t}(S^2\times S^2),\,t\in \mathbb{R},$ is defined as the domain of the operator 
  $(I-2\Delta_{S^2\times S^2})^{\tfrac{t}{2}}$ with graph norm
		$$ ||f||_t = ||(I-2\Delta_{S^2\times S^2})^{\tfrac{t}{2}}f||_{L^2(S^2\times S^2)} ,
		$$
and 
the Sobolev space $H_t^{\Delta}(S^2 \times S^2),\,t\in \mathbb{R},$ is defined as the
 subspace of all functions $f\in H_t(S^2\times S^2)$ such $\Delta_1 f = \Delta_2 f.$ 
\end{definition}

\begin{definition}[Sobolev spaces on $SO(3)$] The Sobolev space $H_t(SO(3)),\,t \in \mathbb{R},$ is defined as the domain of the operator $(I-4\Delta_{SO(3)})^{\tfrac{t}{2}}$ with graph norm
			$$ |||f|||_t = ||(I-4\Delta_{SO(3)})^{\tfrac{t}{2}}f||_{L^2(SO(3))},\>\>f\in L^2(SO(3)). $$
\end{definition}	

\begin{theorem}
    For any $t\geq 0$ the Radon transform on $SO(3)$ is an invertible  mapping
        \begin{align}
           \mathcal{R} : H_{t}(SO(3))\to H_{t+\frac{1}{2}}^{\Delta}(S^2\times S^2).
        \end{align}
     and
 \begin{eqnarray}\label{inv_formel}
  f(g) = \int_{S^2} (I-2\Delta_{S^2\times S^2})^{\tfrac{1}{2}}(\mathcal{R} f)(gy, y) dy = \frac{1}{4\pi} (\mathcal{R} ^* \mathcal{R} f )(g).
 \end{eqnarray}  
\end{theorem}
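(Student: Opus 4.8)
The plan is to reduce everything to the explicit spectral description of $\mathcal{R}$ furnished by the preceding Theorem, specialized to $\mathcal{G}=SO(3)$, $\mathcal{H}=SO(2)$, $|S^2|=4\pi$, namely $\mathcal{R} f(x,y) = 4\pi\sum_{k}\sum_{i,j}\hat{f}(k)_{ij}\,\mathcal{Y}_k^i(x)\overline{\mathcal{Y}_k^j(y)}$. The whole argument rests on the observation that $\mathcal{R}$ is diagonal in the Peter--Weyl / spherical-harmonic basis: it carries the $k$-th Fourier block of $f$ on $SO(3)$ to the ``diagonal'' block spanned by $\mathcal{Y}_k^i(x)\overline{\mathcal{Y}_k^j(y)}$ on $S^2\times S^2$, multiplying only by the scalar $4\pi$. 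The two multipliers I need come from the eigenvalue relations already recorded: on $\mathcal{H}_k$ and on the diagonal block one has $\Delta_{SO(3)}=\Delta_{S^2}=-k(k+1)$, so both $I-4\Delta_{SO(3)}$ and $I-2\Delta_{S^2\times S^2}$ act as $(2k+1)^2$, and hence $(I-2\Delta_{S^2\times S^2})^{1/2}$ acts on the range of $\mathcal{R}$ as multiplication by the representation dimension $d_k=2k+1$.

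Feeding these eigenvalues into the Parseval identity and into the graph norms of the two Sobolev Definitions yields, block by block, an identity of the form $\|\mathcal{R} f\|_{H_{t+1/2}(S^2\times S^2)}^2 = c\,|||f|||_t^2$ with an explicit constant. The half-order gain is forced precisely because the $SO(3)$-Parseval weight carries an extra factor $d_k=(2k+1)$ that must be matched on $S^2\times S^2$ by raising the Sobolev order by $1/2$, since $(2k+1)^{2(t+1/2)}=(2k+1)^{2t+1}$. This identity shows at once that $\mathcal{R}$ is bounded and bounded below, hence a topological isomorphism onto its range. To identify the range with $H_{t+1/2}^{\Delta}$, I would note that a general element of $L^2(S^2\times S^2)$ expands in the blocks $\mathcal{Y}_k^i(x)\overline{\mathcal{Y}_l^j(y)}$, on which $\Delta_1=-k(k+1)$ and $\Delta_2=-l(l+1)$; thus the constraint $\Delta_1 u=\Delta_2 u$ is equivalent to $u$ being supported on the diagonal blocks $k=l$, which are exactly the blocks occupied by $\mathcal{R} f$. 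Conversely, given $u\in H_{t+1/2}^{\Delta}$ I read off its diagonal coefficients, set $\hat{f}(k)$ accordingly, and invoke the norm identity to verify that the resulting $f$ lies in $H_t(SO(3))$ with $\mathcal{R} f=u$, which gives surjectivity and hence invertibility.

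For the inversion formula I would compose the adjoint \eqref{adjoint_op} with $\mathcal{R}$ directly. Applying $(I-2\Delta_{S^2\times S^2})^{1/2}$ to $\mathcal{R} f$ multiplies the $k$-th block by $d_k=2k+1$, which is exactly the dimension weight in the Peter--Weyl synthesis $f(g)=\sum_k d_k\,{\rm trace}(\mathcal{T}_k(g)\hat{f}(k))$. Integrating $(I-2\Delta_{S^2\times S^2})^{1/2}(\mathcal{R} f)(g\cdot y,y)$ over $y\in S^2$ then collapses each product $\mathcal{Y}_k^i(g\cdot y)\overline{\mathcal{Y}_k^j(y)}$, via the orthogonality and zonal-averaging relations for spherical harmonics, into a single matrix coefficient of $\mathcal{T}_k$ at $g$; reassembling the blocks reproduces $f(g)$, which is the content of \eqref{inv_formel}. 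The same computation, read as $(\mathcal{R}^*\mathcal{R} f, h)_{L^2(SO(3))}=(\mathcal{R} f,\mathcal{R} h)$ in the $|||\cdot|||$-pairing together with the norm identity, gives the scalar relation $\mathcal{R}^*\mathcal{R}=(4\pi)\,{\rm Id}$ directly.

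I expect the main obstacle to be purely bookkeeping: keeping the normalization constants ($4\pi$, $8\pi^2$, the normalized Haar measure on $SO(3)$ versus the Lebesgue measure on $S^2$ for which the $\mathcal{Y}_k^i$ are orthonormal) consistent through the adjoint computation and the $y$-integration, so that the composition $\tfrac{1}{4\pi}\mathcal{R}^*\mathcal{R}$ collapses to the identity rather than to another scalar multiple, and ensuring that the factor $d_k=2k+1$ produced by $(I-2\Delta_{S^2\times S^2})^{1/2}$ is matched exactly against the Peter--Weyl dimension weight. A secondary point needing care is the rigorous identification of the range as $H_{t+1/2}^{\Delta}$, i.e. verifying that the constraint $\Delta_1=\Delta_2$ exhausts precisely the diagonal blocks and that the series defining $\mathcal{R}^{-1}$ converges in the correct Sobolev norm.
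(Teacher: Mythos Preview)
Your proposal is correct and follows essentially the same route as the paper: the paper's proof is two sentences long, deferring everything to the isometry identity $\|f\|_{L^2(SO(3))}^2 = \|(4\pi)^{-1}(I-2\Delta_{S^2\times S^2})^{1/4}\mathcal{R} f\|_{L^2(S^2\times S^2)}^2$ and the adjoint formula \eqref{adjoint_op} established immediately before the theorem, and you have correctly reconstructed and made explicit the spectral computation underlying both. Your treatment of surjectivity onto $H_{t+1/2}^{\Delta}$ (via the observation that $\Delta_1 u=\Delta_2 u$ forces the expansion to be supported on the diagonal blocks $k=l$) is in fact more careful than what the paper writes down, and your caveats about normalization bookkeeping are well placed but not a genuine obstacle.
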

\begin{proof}
For the mapping properties it is sufficient to consider case $t=0$. 
Because the Radon transform is an isometry up to the factor $4\pi$, we obtain
(\ref{inv_formel}).
\end{proof}

Since 
\begin{equation*}
\mathcal{R}(\mathcal{T}^k)(x,y)=\mathcal{T}^k(x)\pi_{SO(2)}\left(\mathcal{T}^k(y)\right)^*
\end{equation*}
we have
\begin{align*}
    \mathcal{R} \mathcal{T}^k_{ij}(x,y) &= \mathcal{T}_{i1}^k(x) \overline{\mathcal{T}^k_{j1}(y} )
    =\frac{4\pi}{2k+1} \mathcal{Y}_k^i(x)\overline{\mathcal{Y}_k^j(y)}.
   \end{align*}

\begin{theorem}[Reconstruction formula]\label{Recon}
Let
\begin{align*}
    G(x,y) = \mathcal{R} f(x,y) &= \sum_{k=0}^\infty \sum_{i,j=1}^{2k+1} \widehat G(k)_{ij} \mathcal{Y}_k^i(x) 
    \overline{\mathcal{Y}_k^j(y)} \in H_{\frac{1}{2}+t}^{\Delta}(S^2\times S^2),\
    t\geq 0,
\end{align*}
be a result of the Radon transform. Then the pre-image $f\in H_t(SO(3)),\ t\geq 0,$ is given by
\begin{align*}
    f &= \sum_{k=0}^\infty \sum_{i,j=1}^{2k+1} \frac{(2k+1)}{4\pi} \widehat G(k)_{ij} 
    \mathcal{T}_{ij}^k = \sum_{k=0}^\infty \sum_{i,j=1}^{2k+1} (2k+1)\widehat f(k)_{ij} 
    \mathcal{T}_{ij}^k  \\
    & = \sum_{k=0}^\infty (2k+1){\rm trace\,}(\widehat f(k) \mathcal{T}^k).
\end{align*}
\end{theorem}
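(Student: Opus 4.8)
The plan is to reduce the entire statement to the single identity
$\mathcal{R}\mathcal{T}_{ij}^k(x,y)=\tfrac{4\pi}{2k+1}\mathcal{Y}_k^i(x)\overline{\mathcal{Y}_k^j(y)}$ recorded immediately before the theorem, and to exploit that $\{\mathcal{T}_k^{ij}\}$ is an orthonormal system in $L^2(SO(3))$ while the products $\{\mathcal{Y}_k^i(x)\overline{\mathcal{Y}_k^j(y)}\}$ form an orthonormal system in $L^2(S^2\times S^2)$. First I would write the Wigner expansion of the unknown pre-image, $f=\sum_k\sum_{i,j}\widehat{f}(k)_{ij}\mathcal{T}_{ij}^k$, apply $\mathcal{R}$ term by term (the linearity and continuity needed being furnished by the isometry of the mapping theorem preceding this one), and obtain $G=\mathcal{R}f=\sum_k\sum_{i,j}\tfrac{4\pi}{2k+1}\widehat{f}(k)_{ij}\mathcal{Y}_k^i(x)\overline{\mathcal{Y}_k^j(y)}$.

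Since the products $\mathcal{Y}_k^i\overline{\mathcal{Y}_k^j}$ are orthogonal, I would match this against the prescribed expansion $G=\sum_k\sum_{i,j}\widehat{G}(k)_{ij}\mathcal{Y}_k^i\overline{\mathcal{Y}_k^j}$ coefficientwise. This forces $\widehat{G}(k)_{ij}=\tfrac{4\pi}{2k+1}\widehat{f}(k)_{ij}$, hence, solving for the unknown coefficient, $\widehat{f}(k)_{ij}=\tfrac{2k+1}{4\pi}\widehat{G}(k)_{ij}$. Substituting back into the Wigner expansion yields the first displayed equality. The remaining two equalities are purely bookkeeping: the second passes from the Wigner-series coefficient to the Peter--Weyl Fourier coefficient (which carries the weight $2k+1$), and the third rewrites the double sum over $i,j$ as $\mathrm{trace}(\widehat{f}(k)\mathcal{T}^k)$ after the standard transposition of matrix indices.

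The only genuine point, and the main obstacle, is convergence together with the membership $f\in H_t(SO(3))$. Here I would invoke the eigenvalue computation $(I-2\Delta_{S^2\times S^2})(\mathcal{Y}_k^i\overline{\mathcal{Y}_k^j})=(1+4k(k+1))\,\mathcal{Y}_k^i\overline{\mathcal{Y}_k^j}=(2k+1)^2\,\mathcal{Y}_k^i\overline{\mathcal{Y}_k^j}$, so that $(I-2\Delta_{S^2\times S^2})^{1/2}$ acts on the $k$-th block as multiplication by $2k+1$, matching exactly the behaviour of $\Delta_{SO(3)}$, which equals $-k(k+1)$ on $\mathcal{T}_k^{ij}$. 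Comparing the two Sobolev norms through these eigenvalues shows that the coefficients $\tfrac{2k+1}{4\pi}\widehat{G}(k)_{ij}$ define a function whose $H_t(SO(3))$-norm is controlled by $\|G\|_{H_{t+1/2}^{\Delta}(S^2\times S^2)}$, the extra half-derivative being precisely the $(2k+1)^{1/2}$ gap. Consequently the series converges in $H_t(SO(3))$ and the formal term-by-term manipulations are justified. In effect this is a restatement of the isometry already established in (\ref{inv_formel}), so once the eigenvalue bookkeeping is in place the reconstruction formula follows.
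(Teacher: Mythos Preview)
Your proposal is correct and follows exactly the route the paper intends: the paper states this theorem without an explicit proof, relying entirely on the identity $\mathcal{R}\mathcal{T}_{ij}^k=\tfrac{4\pi}{2k+1}\mathcal{Y}_k^i\overline{\mathcal{Y}_k^j}$ recorded just before it, so your coefficient-matching argument is precisely the omitted verification. Your additional paragraph on the eigenvalue identity $(1+4k(k+1))=(2k+1)^2$ and the resulting Sobolev-norm comparison in fact supplies the convergence justification that the paper leaves implicit (it is absorbed into the isometry statement of the preceding theorem); the only cosmetic issue is that you temporarily use the symbol $\widehat f(k)_{ij}$ for your orthonormal-expansion coefficient before switching to the paper's Peter--Weyl convention, which could be made clearer by introducing a separate letter.
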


\section{Radon transforms on the group $SO(3)$ and the sphere $S^3$}\label{relations1}

At the beginning of this section we show that  $S^3$ is a double cover of $SO(3)$. This fact allows us to identify  every  
function $f$ on $SO(3)$  with an even function on $S^3.$ After this  identification the  crystallographic Radon transform on $SO(3)$ becomes the geodesic Radon transform on $S^{3}$ in the sense of Helgason  \cite{H3}, \cite{SH1}, \cite{SH2}. In Theorem \ref{newinversion}  we show how Helgason's inversion formula for this transform can be  interpreted in crystallographic terms.

\subsection{Quaternions and rotations}
To understand the  crystallographic Radon transform one has to understand relations between $SO(3), \>S^{3}, \> S^{2}\times S^{2}$.  One of the ways to describe these relations is by using  the algebra of quaternions (see  \cite{Lounesto},   \cite{bernstein/schaeben}, \cite{P}). 

\begin{definition} Quaternions $\mathbb{H}$ are hypercomplex numbers of the form 
	$$ q = a_0+a_1i+a_2j+a_3k, $$
	where $a_0,\,a_1,\,a_2,\,a_3$ are real numbers and the generalized imaginary units $i,\,j,\,k$ satisfy the following multiplication rules:
\begin{align*}
	i^2 & = j^2 = k^2 = -1, \\
	ij & = k =-ji,\ jk = i = -kj,\ ki = j = -ik. 
\end{align*}
\end{definition}

\begin{definition} A quaternion $ q = a_0+a_1i+a_2j+a_3k = q_0 + \mathbf{q}$ is the sum of the real part $q_0 = a_0$ and the pure part $\mathbf{q}= a_1i+a_2j+a_3k .$ A quaternion $q$ is called pure if its real part vanishes. The conjugate $\bar{q}$ of a quaternion $q=a_0+\mathbf{q}$ is obtained by changing the sign of the pure part:
	$$ \bar{q} = a_0 - \mathbf{q}. $$
The norm $||q||$ of a quaternion $q$ is given by $||q||^2= q\bar{q}= a_0^2+a_1^2+a_2^2+a_3^2$ and coincises with the Euclidean norm of the associated element in $\mathbb{R}^4.$ 
\end{definition}
All non-zero quaternions are invertible with inverse $q^{-1}=\frac{\bar{q}}{||q||^2}.$
Next, we connect quaternions and rotations in $\mathbb{R}^3.$ Take a pure quaternion or a vector
$$ \mathbf{a}= a_1i+a_2j+a_3k \in \mathbb{R}^3  $$
with norm $||\mathbf{a}||= \sqrt{a_1^2+a_2^2+a_3^2}.$ For a non-zero quaternion $q\in\mathbb{H}$ the element $q\mathbf{a}q^{-1}$ is again a pure quaternion with same length, i.e. $||q\mathbf{a}q^{-1}||= ||\mathbf{a}||.$ That means that the mapping $\mathbb{R}^3\to \mathbb{R}^3$
$$ \mathbf{a} \mapsto q\mathbf{a}q^{-1} $$ is a rotation with the natural identification of $ \mathbb{R}^3$ with the set of pure quaternions. Each rotation in $SO(3)=\{U\in Mat(3,\mathbf{R}):\,U^TU = I,\ {\rm det\,}U = 1\}$ can be  represented in such form  and there are two unit quaterions $q$ and $-q$ representing the same rotation $q\mathbf{a}q^{-1}=(-q)\mathbf{a}(-q^{-1}).$ That means that
$$ S^3 = \{q\in \mathbb{H}: ||q||=1\} $$
is a two-fold covering group of $SO(3),$ i.e. $SO(3)\simeq S^3/\{\pm 1\}.$ 

\begin{definition}[\cite{bernstein/schaeben}] \label{definitionC}Let $q_1,\,q_2$ be two unit orthogonal quaternions, i.e. the scaler part of $q_1q_2$ which is equal to Euclidean scalar product of the vectors $q_1$ and $q_2$ is zero. The set of quaternions
$$ q(t) = q_1\cos t + q_2 \sin t,\quad t\in [0,\,2\pi) $$
is called a circle in the space of unit quaternions and denoted as $C_{q_1,q_2}.$
\end{definition}

Obviously, the circle $C_{q_1,q_2}$ is the intersection of the unit sphere $S^{3}$ with the plane $E(q_1,q_2)$ spanned by $q_1,\,q_2$ and passing though the origin $\mathcal{O}$.  
\begin{theorem} [\cite{bernstein/schaeben}] \label{rotations}Given a pair of unit vectors $(x,y)\in S^2\times S^2$ with $x\not=-y,$ the "great circle"  $C_{x,y} \in SO(3)$ of all rotations with $gy = x$ in $SO(3)$ may be represented as a great circle $C_{q_1,q_2}$ of unit quaternions such that
$$  C_{q_1,q_2}:= E(q_1,q_2)\cap S^3 $$
with 
\begin{equation}\label{relations} q_1:= \cos \frac{\eta}{2} + \frac{y\times x}{||y\times x||} \sin \frac{\eta}{2}, \quad q_2 :=  \frac{y+x}{||y+x||}, \end{equation}
where $\eta$ denotes the angle between $x$ and $y,$ i.e. $\cos \eta = y\cdot x.$ Rotation $gy = x$ in $SO(3)$ corresponds to rotation $x=qy\bar{q}$ in $ \mathbb{H}$.
\end{theorem}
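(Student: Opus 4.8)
The plan is to carry everything over to the quaternionic model established above, where $S^3$ is the two-fold cover of $SO(3)$ via $q\mapsto(\mathbf a\mapsto q\mathbf a\bar q)$ (recall $q^{-1}=\bar q$ for unit $q$), and to show that the lift of the fibre $C_{x,y}=\{g: gy=x\}$ is precisely the great circle $C_{q_1,q_2}$ of Definition~\ref{definitionC} with the stated $q_1,q_2$. First I would exhibit one rotation sending $y$ to $x$: I claim $q_1=\cos\frac{\eta}{2}+\mathbf n\sin\frac{\eta}{2}$, with $\mathbf n=(y\times x)/\|y\times x\|$, is the unit quaternion of the rotation by angle $\eta$ about the axis $\mathbf n$ perpendicular to the plane of $x$ and $y$. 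Using $\|y\times x\|=\sin\eta$ and the identity $(y\times x)\times y=x-(x\cdot y)\,y$, a one-line Rodrigues computation gives $\cos\eta\,y+\sin\eta\,(\mathbf n\times y)=x$, so $q_1 y\bar q_1=x$ and $q_1$ lies in the lift of $C_{x,y}$.

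Next I would parametrize the whole fibre. If $gy=x$ and $g_0y=x$, then $gg_0^{-1}$ fixes $x$, so every rotation sending $y$ to $x$ is a rotation about the axis $x$ composed with the fixed rotation $q_1$. On the quaternion level this reads $q(\phi)=\bigl(\cos\frac{\phi}{2}+x\sin\frac{\phi}{2}\bigr)q_1$, $\phi\in[0,2\pi)$, that is $q(\phi)=\cos\frac{\phi}{2}\,q_1+\sin\frac{\phi}{2}\,(x q_1)$. The crux of the proof is then to identify the second basis vector: I would compute the quaternion product $x q_1$ using the pure-quaternion rule $ab=-a\cdot b+a\times b$, together with $x\cdot\mathbf n=0$ (since $\mathbf n\parallel y\times x$) and $x\times(y\times x)=y-(x\cdot y)\,x$. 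After applying the half-angle identities $\sin\eta=2\sin\frac{\eta}{2}\cos\frac{\eta}{2}$ and $\cos\eta=2\cos^2\frac{\eta}{2}-1$, the expression collapses to $x q_1=(x+y)/(2\cos\frac{\eta}{2})$. Since $\|x+y\|^2=2(1+\cos\eta)=4\cos^2\frac{\eta}{2}$ and $x\neq-y$ forces $\cos\frac{\eta}{2}>0$, this is exactly $q_2=(x+y)/\|x+y\|$. Hence $q(\phi)=\cos\frac{\phi}{2}\,q_1+\sin\frac{\phi}{2}\,q_2$, which after the reparametrization $t=\phi/2$ is the great circle $C_{q_1,q_2}=E(q_1,q_2)\cap S^3$.

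To finish I would check the standing hypotheses of Definition~\ref{definitionC}: both $q_1$ and $q_2$ are unit (immediate from $\cos^2\frac{\eta}{2}+\sin^2\frac{\eta}{2}=1$ and the normalization of $q_2$), and they are orthogonal in $\mathbb R^4$ because $q_1$ has real part $\cos\frac{\eta}{2}$ and vector part parallel to $\mathbf n$, while $q_2$ is pure with direction $x+y$, and $x+y\perp(y\times x)$. I would also record the two-to-one bookkeeping: as $\phi$ runs over $[0,2\pi)$ the parameter $t=\phi/2$ covers only half of $C_{q_1,q_2}$, the antipodal half being the $-q$ lifts of the same rotations, consistent with $SO(3)\simeq S^3/\{\pm1\}$. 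The correspondence $gy=x\leftrightarrow x=qy\bar q$ is then just the covering map itself.

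I expect the identification $x q_1=q_2$ in the third step to be the main obstacle: it is the only place where the specific normalizations of $q_1$ and $q_2$, the pure-quaternion multiplication, and the half-angle identities must all line up simultaneously, and it is exactly where the hypothesis $x\neq-y$ is used to guarantee $\cos\frac{\eta}{2}>0$ and $\|x+y\|\neq0$. Everything else is either a standard Rodrigues-type verification or bookkeeping about the cover.
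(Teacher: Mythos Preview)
The paper does not supply a proof of this theorem; it is quoted from \cite{bernstein/schaeben} and stated without argument. Your proof is correct and complete: the Rodrigues verification that $q_1 y\bar q_1=x$, the stabilizer parametrization $q(\phi)=(\cos\tfrac{\phi}{2}+x\sin\tfrac{\phi}{2})\,q_1$, and the crux computation $xq_1=(x+y)/(2\cos\tfrac{\eta}{2})=q_2$ all check out line by line, as does the orthogonality of $q_1$ and $q_2$ in $\mathbb{R}^4$ and the two-to-one bookkeeping for the cover.

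One small remark worth recording: the formula for $q_1$ is already singular in the theorem statement when $x=y$ (since $y\times x=0$), a case not excluded by the hypothesis $x\neq -y$. In that limit one should read $q_1=1$, $q_2=x$, and the great circle $\{\cos t+x\sin t\}$ is then exactly the lift of the stabilizer of $x$, consistent with your parametrization. This is a defect of the stated formula rather than of your argument, but you may want to flag it.
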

For an arbitrary quaternion $q$  we define the linear map $\tau(q)$ of the algebra of quaternions $\mathbb{H}$ into itself which is given by the formula
\begin{equation}\label{cover}
\tau(q)h=qh\bar{q},\>h\in  \mathbb{H}.
\end{equation}
 One can check that if $q\in S^{3}$ then $\tau(q)\in SO(3)$.
 
Let  us summarize the following important facts (see \cite{Meister}, \cite{P} for more details). 
\begin{enumerate}

\item  The map $\tau: q\rightarrow \tau(q)$ has the  property $\tau(q)=\tau(-q)$ which shows that $\tau$ is a double cover of $S^{3}$ onto $SO(3)$.

\item $\tau$ maps 
\begin{equation}\label{mappingcircles}
\tau: C_{q_{1}, q_{2}}\rightarrow C_{x,y},
\end{equation}
where  $C_{q_{1}, q_{2}}= E(q_1,q_2)\cap S^3$ is a great circle in  $S^{3}$ and  $C_{x,y}$ is a great circle in $SO(3)$  of all rotations $g$  with $gy=x,\>(x,y)\in S^{2}\times S^{2}$ (relations between $(q_{1}, q_{2})$ and $(x,y)$ are given in (\ref{relations})). Conversely, pre-image of $C_{x,y}$ is $C_{q_{1},  q_{2}}$.

\item Great circles $C_{q_{1},q_{2}}$ are geodesics in $S^{3}$ in the natural metric.

\item The variety of all great circles $C_{x,y}\in SO(3),\>\>(x,y)\in S^{2}\times S^{2},$  (which are sets of all rotations $g$  with $gy=x$) can be identified with the product $S^{2}\times S^{2}$. For any $(x,y)\in S^{2}\times S^{2}$ the circles $C_{x,y}$ and $C_{-x,y},\>\>\>x\neq -y,$ are contained in orthogonal $2$-planes in $ \mathbb{H}$.
\end{enumerate}

\subsection{Radon transforms on $S^{3}$ and on $SO(3)$}

Let $\Xi$ denote the set of all $1$--dimensional geodesic submanifolds 
$\xi \subset S^3$.
According to the previous subsection each $\xi \in \Xi$ is a great circle of $S^{3}$, i.e. a circle with centre ${\mathcal O}$.  The manifold $\Xi$ can be identified with the manifold $S^{2}\times S^{2}$.

Following Helgason (see \cite{H3}, \cite{SH1}, \cite{SH2}), we introduce the next definition.

\begin{definition} 
For a continuous function $F$ defined on $S^{3}$ its $1$--dimensional
spherical (geodesic) Radon transform $\hat{ F}$ is a function, which is defined on any $1$-dimensional geodesic submanifold $\xi\subset S^{3}$ by the following formula 
\begin{equation}\label{Helg}
\hat{F}(\xi)=\frac{1}{2\pi} \int_{\xi} F(q) \, d\omega_1(q) = \int_{\xi} F(q) \, dm(q),
\end{equation}
with the normalized measure $m=\frac{1}{2\pi} \omega_1$ where $\omega_1$ denotes the usual one--dimensional
circular Riemannian measure.
\end{definition}
To invert transformation (\ref{Helg}) Helgason introduces dual transformation
\begin{equation}\label{dual}
\check{\phi}(q) = \int_{q\in \xi} \phi(\xi)\,d\mu(\xi), \>\>q\in S^{3},
\end{equation}
which represents  the average of a continuous function $\phi$  over all $\xi\in \Xi$ passing through $q\in S^{3}.$ Further,
$$ \check{\phi}_\rho(q) = \int_{\{d(q,\xi)= \rho\}} \phi(\xi) d\mu(\xi),\ \rho\geq 0, \>\>q\in S^{3},$$
where $d\mu$ is the average over the set of great circles $\xi$ at distance $\rho$ from $q.$ 
We use the inversion formula of  S. Helgason \cite{SH2}, which was obtained
for the general case two-point homogeneous spaces. For two dimensional
sphere the totally geodesic Radon transform is also known as the Funk transform. The inversion  formula  can be written as 
\begin{equation}\label{HelgInver}
 F(q) = \frac{1}{\pi} \left.\left[\frac{d}{du^2}\int_0^u (\hat{F})^{\check{}}_{cos^{-1}(v)}(q)v(u^2-v^2)^{-1/2}dv\right]\right|_{u=1},\>\>\>q\in S^{3}. 
 \end{equation}

Let us describe relations between geodesic Radon transform of functions defined on $S^{3}$ and the Radon transform $\mathcal{R}$ of functions defined on $SO(3)$. Given a function $f$ on $SO(3)$ one can  consider its Radon transform $\mathcal{R} f$ which is defined on the set of all great circles $C_{x,y}\subset SO(3)$.  On the other hand one can construct an even function $F$ on $S^{3}$ by using the formula 
\begin{equation}\label{pullback}
F(q)=f(\tau(q)),\>\>\>q\in S^{3},
\end{equation}
where the mapping $\tau: S^3 \to SO(3)$ was defined in (\ref{cover}).  For the function $F$ one can consider its geodesic Radon transform $\hat{F}$ which is defined on the set of all great circles $C_{q_{1}, q_{2}}\subset S^{3}$. One can check that the following formula holds
\begin{equation}\label{relation}
\mathcal{R} f(C_{x,y}) = \frac{1}{2\pi} \int_{C_{x,y}} f(g) d\omega(g) = \frac{1}{\pi} \int_{C_{q_1,q_2}} F(q)\,dq = 2\hat{F}(C_{q_1,q_2}), 
\end{equation}
where relations between circles $C_{x,y}$ and $C_{q_{1}, q_{2}}$ where described in Proposition \ref{rotations}. 
Since varieties of great circles on $S^{3}$ and on $SO(3)$ can be parametrized by points $(x,y)\in S^{2}\times S^{2}$ both transforms $\mathcal{R} f$ and $\hat{F}$ can be considered as functions on $S^{2}\times S^{2}$.

To describe  connection between different transforms and functions  it is useful to introduce  the
angle density function 
$$
({\mathcal A}F)({x}, {y}; \rho)=\frac{1}{2\pi}\int_{c(y; \rho)}\hat{F}(x,y^{\prime})d\omega_1(y^{\prime}),
$$
where $F$ is a function on $S^{3}$ and where $c(y; \rho)$ is a small circle of radius $\rho$ centered at  $y$. Note, that $({\mathcal A}F)({x}, {y}; \rho)$ was introduced in
\cite{HJB1} and  \cite{HJB2}.

The following properties hold
\begin{eqnarray}
({\mathcal A}F)({ x}, {y}; 0) & = & \hat{F}({ x}, {y}) ,\label{ar} \\
({\mathcal A}F)({ x}, { y}; \pi) & = & \hat{F} ({ x}, -{ y}) .\nonumber
\end{eqnarray}
According to its definition the quantity $({\mathcal A}F)({ x}, { y}; \pi) $ is the mean value of the spherical pole probability density function
over any small
circle centered at $y$. Thus, it is the probability density that the crystallographic direction 
${x}$
statistically encloses the angle $\rho, \>\>0 \leq \rho \leq \pi,$ with the specimen direction 
${ y}$ given the
orientation probability density function $F$. Its central role for the inverse
Radon transform was recognized in \cite{MEB} \cite{SM79}.

Our objective is to present two other inversion formulas. 

\begin{lemma}\label{formula}Let $F$ be an even continuous function on $S^3.$ Then the geodesic Radon transform $\hat{F}$ can be inverted by the following formula
\begin{equation}\label{secondinvert}
	F(q) = \frac{1}{2\pi} \left[(\hat{F})^{\check{}}_{\tfrac{\pi}{2}}(q) + 2 \int_0^{\pi}\left( \frac{d}{d\cos\theta}(\hat{F})^{\check{}}_{\tfrac{\theta}{2}}(q)\right)\cos\tfrac{\theta}{2}d\theta
	\right],\>\>q\in S^{3}. 	
\end{equation}
\end{lemma}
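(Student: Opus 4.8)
The plan is to derive the stated formula (\ref{secondinvert}) directly from Helgason's inversion formula (\ref{HelgInver}) by a single change of variables followed by a differentiation under the integral sign; no new geometric input is required. The hypothesis that $F$ is even is exactly what makes inversion possible: every great circle of $S^3$ is carried to itself by the antipodal map, so $\hat{F}$ depends only on the even part of $F$, and (\ref{HelgInver}) recovers $F$ precisely when $F$ is even. Fix $q\in S^3$. The distance $\rho=d(q,\xi)$ from $q$ to a great circle $\xi$ ranges over $[0,\tfrac{\pi}{2}]$, and this is the range that appears below. Abbreviate $\Phi(\rho):=(\hat{F})^{\check{}}_{\rho}(q)$, the mean of $\hat{F}$ over the great circles at distance $\rho$ from $q$, and set $h(v):=\Phi(\cos^{-1}v)$ for $v\in[0,1]$, so that (\ref{HelgInver}) reads
\[
F(q)=\frac{1}{\pi}\left[\frac{d}{du^2}\int_0^u h(v)\,v\,(u^2-v^2)^{-1/2}\,dv\right]_{u=1}.
\]

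First I would linearize the inner integral. Writing $s=u^2$ and substituting $w=v^2$ converts it into the Abel-type (half-order fractional) integral
\[
J(s)=\frac12\int_0^s h(\sqrt{w})\,(s-w)^{-1/2}\,dw,
\]
while the operator $\frac{d}{du^2}[\,\cdot\,]_{u=1}$ becomes simply $J'(1)$. This step exposes the structure of (\ref{HelgInver}): up to constants it is the endpoint value of the derivative of a fractional integral of $h(\sqrt{\cdot\,})$.

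Next I would differentiate $J$. After the further substitution $t=s-w$ one has $J(s)=\tfrac12\int_0^s \psi(s-t)\,t^{-1/2}\,dt$ with $\psi(w)=h(\sqrt{w})$, and the Leibniz rule yields $J'(s)=\tfrac12[\psi(0)s^{-1/2}+\int_0^s\psi'(s-t)t^{-1/2}\,dt]$. Evaluating at $s=1$, undoing the substitutions, and using $\psi'(w)=h'(\sqrt{w})/(2\sqrt{w})$ with $w=v^2$ collapses the remaining integral, giving
\[
F(q)=\frac{1}{2\pi}\left[h(0)+\int_0^1\frac{h'(v)}{\sqrt{1-v^2}}\,dv\right].
\]
I expect the \emph{main obstacle} to be the rigorous justification of this differentiation: the kernel $(s-w)^{-1/2}$ is singular at the upper endpoint $w=s$, so moving the $s$-dependence out of the singular factor by the substitution $t=s-w$ is essential in order to differentiate under the integral legitimately and to extract the boundary term $\tfrac12\psi(0)s^{-1/2}$, which is precisely what becomes the term $(\hat{F})^{\check{}}_{\pi/2}(q)$. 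This requires $\Phi$ (equivalently $\hat{F}$ and its dual) to be sufficiently smooth; I would establish the identity first for smooth even $F$ and then extend to continuous $F$ by density, or read the derivatives in the appropriate weak sense.

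Finally I would translate the result back to the variable $\theta$ of the statement. Since $h(0)=\Phi(\tfrac{\pi}{2})=(\hat{F})^{\check{}}_{\pi/2}(q)$, the boundary term is already in place. For the integral I substitute $v=\cos\tfrac{\theta}{2}$; from $\cos\theta=2\cos^2\tfrac{\theta}{2}-1=2v^2-1$ one obtains $\frac{d}{d\cos\theta}=\frac{1}{4v}\frac{d}{dv}$ on functions of $v$, whence $\frac{d}{d\cos\theta}\Phi(\tfrac{\theta}{2})=h'(v)/(4\cos\tfrac{\theta}{2})$ and $\big(\frac{d}{d\cos\theta}\Phi(\tfrac{\theta}{2})\big)\cos\tfrac{\theta}{2}=\tfrac14 h'(v)$. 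With $d\theta=-2\,dv/\sqrt{1-v^2}$ and the limits $\theta:0\to\pi$ matching $v:1\to 0$, the integral $\int_0^1 h'(v)(1-v^2)^{-1/2}\,dv$ equals $2\int_0^\pi\big(\frac{d}{d\cos\theta}\Phi(\tfrac{\theta}{2})\big)\cos\tfrac{\theta}{2}\,d\theta$. Assembling the boundary and integral pieces with the prefactor $\frac{1}{2\pi}$ reproduces exactly (\ref{secondinvert}).
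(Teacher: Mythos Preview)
Your proof is correct and follows essentially the same route as the paper: start from Helgason's formula (\ref{HelgInver}), linearize via $w=v^2$, $s=u^2$, shift the singularity by $t=s-w$, apply Leibniz to extract the boundary term $(\hat{F})^{\check{}}_{\pi/2}(q)$, and finish with the substitution $v=\cos\tfrac{\theta}{2}$. Your additional remarks on the role of evenness and on justifying the differentiation under the singular integral are welcome elaborations, but the underlying computation matches the paper step for step.
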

\begin{proof}
We start with $t=v^2$ to obtain
$$  F(q) = \frac{1}{2\pi} \left.\left[\frac{d}{du^2}\int_0^{u^2} (\hat{F})^{\check{}}_{cos^{-1}(\sqrt{t})}(q)\frac{1}{\sqrt{u^2-t}}dt\right]\right|_{u=1},  $$
and $s = u^2$
$$  F(q) = \frac{1}{2\pi} \left.\left[\frac{d}{ds}\int_0^{s} (\hat{F})^{\check{}}_{cos^{-1}(\sqrt{t})}(q)\frac{1}{\sqrt{s-t}}dt\right]\right|_{s=1},   $$
to shift the singularity inside the integral we set $\gamma=s-t$ which leads to
$$  F(q) = \frac{1}{2\pi} \left.\left[\frac{d}{ds}\int_0^{s} (\hat{F})^{\check{}}_{cos^{-1}(\sqrt{t})}(q)\frac{1}{\sqrt{\gamma}}d\gamma\right]\right|_{s=1},   $$
now we take the derivative
$$  F(q) = \frac{1}{2\pi} \left.\left[((\hat{F})^{\check{}}_{\cos^{-1}(0)}(q)\frac{1}{\sqrt{s}} + \int_0^{s} \frac{d}{ds}(\hat{F})^{\check{}}_{cos^{-1}(\sqrt{s-\gamma})}(q)\frac{1}{\sqrt{\gamma}}d\gamma\right]\right|_{s=1}. $$
Using 
$$\frac{d}{ds}(\hat{F})^{\check{}}_{cos^{-1}(\sqrt{s-\gamma})}(q) = -\frac{d}{d\gamma}(\hat{F})^{\check{}}_{cos^{-1}(\sqrt{s-\gamma})}(q) $$
and incorrporate $s=1$ we get
$$ F(q) = \frac{1}{2\pi} \left[(\hat{F})^{\check{}}_{\cos^{-1}(0)}(q) - \int_0^{1} \frac{d}{d\gamma}(\hat{F})^{\check{}}_{cos^{-1}(\sqrt{1-\gamma})}(q)\frac{1}{\sqrt{\gamma}}d\gamma\right]. $$
Substitution
$$ 2\gamma = 1-\cos\theta = 2\sin^2\tfrac{\theta}{2},\quad \sqrt{1-\gamma} = \cos\tfrac{\theta}{2}
 $$
 gives the formula (\ref{secondinvert}).
 Lemma is proved. 
 \end{proof}
 
 The formula in the next Theorem  coincides with an inversion formula which was reported  by S. Matthies in \cite{SM79} without any proof. 
The practical importance of this formula is that ${\mathcal A}F$ is easily experimentally accessible and might
yield an improved inversion algorithm.

\begin{theorem} \label{newinversion}
Suppose  that $f$ is a continuous function on $SO(3)$ and function $F$ on $S^{3}$ is defined according to (\ref{pullback}). Then the following reconstruction formula holds 
\begin{equation}\label{NewFormula}
f(g)=\frac{1}{4\pi}\int_{S^2} \hat{F}(x,-gx)d\omega_2(x) +
 $$
 $$
 \frac{1}{2\pi} 
 \int_0^{\pi} \int_{S^2} \frac{d}{d\cos\theta} (\mathcal{A}F)(x,gx;\theta)d\omega_2(x)\cos\tfrac{\theta}{2}d\theta , \>\>\>g\in SO(3),
\end{equation}
where  $\omega_2$ is the usual
two--dimensional spherical Riemann measure. 
\end{theorem}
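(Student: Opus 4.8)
The plan is to read off (\ref{NewFormula}) from the inversion formula (\ref{secondinvert}) of Lemma \ref{formula}. Put $g=\tau(q)$, so that $F(q)=f(g)$ by (\ref{pullback}). Formula (\ref{secondinvert}) already writes $f(g)$ as $\frac{1}{2\pi}(\hat{F})^{\check{}}_{\pi/2}(q)$ plus $\frac{1}{\pi}\int_0^\pi \frac{d}{d\cos\theta}(\hat{F})^{\check{}}_{\theta/2}(q)\cos\frac\theta2\,d\theta$; since the weight $\cos\frac\theta2$ and the operator $\frac{d}{d\cos\theta}$ already coincide with the ingredients of (\ref{NewFormula}), the entire theorem reduces to the single dictionary identity
$$ (\hat{F})^{\check{}}_{\theta/2}(q)=\tfrac12\int_{S^2}(\mathcal{A}F)(x,gx;\theta)\,d\omega_2(x),\qquad g=\tau(q),\ 0\le\theta\le\pi. $$
Granting this, the two integral terms match once the constants $\frac{1}{\pi}$ and $\frac{1}{2\pi}$ are compared, and the first term of (\ref{NewFormula}) is the special case $\theta=\pi$, using $(\mathcal{A}F)(x,gx;\pi)=\hat{F}(x,-gx)$ from (\ref{ar}).

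The heart of the proof is therefore this identity, and I would establish it in two steps. First, the metric input: for $(x,y)\in S^2\times S^2$ the geodesic distance in $S^3$ from $q$ to the great circle that corresponds under $\tau$ to $C_{x,y}=\{h\in SO(3):\,hx=y\}$ equals half the spherical distance between $gx$ and $y$, with $g=\tau(q)$. In particular $q$ lies on that circle exactly when $gx=y$. This half-angle phenomenon is the quaternionic signature of the double cover (\ref{cover}): writing $q_1,q_2$ as in (\ref{relations}) and computing the norm $\|P_{E(q_1,q_2)}q\|$ of the orthogonal projection of $q$ onto the plane $E(q_1,q_2)$ should give $\cos(\theta/2)$ with $\theta=d_{S^2}(gx,y)$, whence $d_{S^3}(q,C_{q_1,q_2})=\theta/2$.

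Second, the disintegration of the averaging measure. By the metric input the level set $\{\xi:\,d(q,\xi)=\theta/2\}$, viewed inside $\Xi\simeq S^2\times S^2$, is $\{(x,y):\,d_{S^2}(gx,y)=\theta\}$. I would parametrize it by letting $x$ run over $S^2$ and $y$ over the small circle $c(gx;\theta)$; correspondingly the invariant average $d\mu$ defining $(\hat{F})^{\check{}}_{\theta/2}$ factors as $d\omega_2(x)$ over the base sphere against the circular measure $d\omega_1$ on $c(gx;\theta)$. The inner integral $\int_{c(gx;\theta)}\hat{F}(x,y)\,d\omega_1(y)$ is by definition $2\pi(\mathcal{A}F)(x,gx;\theta)$, and propagating the normalizations of $\check{}_{\theta/2}$ (a mean value) and of $\mathcal{A}F$ (which carries its own $\frac{1}{2\pi}$) yields precisely the constant $\tfrac12$ and the measure $d\omega_2$ in the identity above.

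The main obstacle is this second, measure-theoretic step: verifying that the invariant averaging measure on the three-parameter family of geodesics equidistant from $q$ disintegrates into $d\omega_2(x)\,d\omega_1$ with a Jacobian that is \emph{constant} in $\theta$, so that no spurious $\theta$-dependent factor appears and the constant is exactly $\tfrac12$. A secondary but genuine source of friction is the bookkeeping imposed by the two-fold cover and the evenness of $F$: one must keep the two slots of $\mathcal{A}F$ and the sign in $\hat{F}(x,-gx)$ consistent with the chosen convention $C_{x,y}=\{h:\,hx=y\}$, and use the orthogonality of $C_{x,y}$ and $C_{-x,y}$ recorded in item (4) after (\ref{cover}) to handle the antipodal contribution at $\theta=\pi$. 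Once the dictionary identity is secured, the remaining steps are exactly the substitutions already performed in the proof of Lemma \ref{formula} and introduce nothing new.
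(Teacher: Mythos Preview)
Your proposal is correct and follows exactly the same route as the paper: reduce (\ref{NewFormula}) via Lemma \ref{formula} to the single identity $(\hat{F})^{\check{}}_{\theta/2}(q)=\tfrac12\int_{S^2}(\mathcal{A}F)(x,gx;\theta)\,d\omega_2(x)$, and recover the first term as the special case $\theta=\pi$ using (\ref{ar}). The paper's proof merely asserts this identity (writing it as (\ref{R1}) and its special case (\ref{R0})) without spelling out the metric half-angle computation or the disintegration of $d\mu$; your two-step outline is a correct and more explicit justification of what the paper leaves implicit, and your flagged ``main obstacle'' is precisely the step the paper skips.
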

\begin{proof}
According to Lemma \ref{formula}
we need to show that
\begin{align}
\int_{S^2} \hat{F}(x,-qx\bar{q})d\omega_2(x)  & = 2 (\hat{F})^{\check{}}_{\tfrac{\pi}{2}}(q) ,\label{R0}\\
\int_{S^2}  (\mathcal{A} F)(x, qx\bar{q};\theta)d\omega_2(x) & = 2 (\hat{F})^{\check{}}_{\tfrac{\theta}{2}}(q), \label{R1}
\end{align}
are fulfilled. Because (\ref{R0}) is a special case of (\ref{R1}) it is enough to verify the last equation. 
 For $g=\tau(q)$ we have
$$ \int_{S^2} (\mathcal{A} F)(x, qx\bar{q},\theta) d\omega_2(x) = 2\int_{\{d(g, \xi)=\tfrac{\theta}{2}\}} \hat{F}(\xi) d\mu(\xi) = 2 (\hat{F})^{\check{}}_{\tfrac{\theta}{2}}(q), $$
where $d\mu$ is the average over the set of $\xi$ at distance $\tfrac{\theta}{2}$  from $g=\tau(q).$ Since $\tau(q)x=qx\bar{q}=gq,\>\>g=\tau(q),$ we obtain the second formula.
Theorem is proved.
\end{proof}

\section{Problem 1: Inversion of crystallographic $X$-ray transform}\label{1}
Unfortunately, neither the Radon transform  $\mathcal{R} f$ over $SO(3)$ nor  the Radon
transform $\hat{f}$ over $S^3$ allows us to solve the crystallographic problem.
The point is that since
$$ \mathcal{Y}_k^i(-x) = (-1)^k\mathcal{Y}_k^i(x),
$$
one has for $\Phi(x,y)=\mathcal{R} f(x,y)$:
\begin{align*}
Pf(x,y)  & = \frac{1}{2}\left(\mathcal{R} f(x,y) + \mathcal{R} f(-x,y)\right)  = \frac{1}{2}(\Phi(x,y)  + \Phi(-x,y))\\ 
& = \frac{1}{2}
\left(\sum_{k=0}^\infty \sum_{i,j=1}^{2k+1} \widehat \Phi(k)_{ij} \mathcal{Y}_k^i(x) 
    \overline{\mathcal{Y}_k^j(y)} + \sum_{k=0}^\infty \sum_{i,j=1}^{2k+1} \widehat {\Phi}(k)_{ij} 
    \mathcal{Y}_k^i(-x) 
    \overline{\mathcal{Y}_k^j(y)} \right)\\  
   & = \frac{1}{2}\left(\sum_{k=0}^\infty \sum_{i,j=1}^{2k+1} \widehat \Phi(k)_{ij} \mathcal{Y}_k^i(x) 
    \overline{\mathcal{Y}_k^j(y)} + \sum_{k=0}^\infty (-1)^k\sum_{i,j=1}^{2k+1} \widehat \Phi(k)_{ij} 
    \mathcal{Y}_k^i(x) 
    \overline{\mathcal{Y}_k^j(y)} \right) \\ 
& = \sum_{l=0}^\infty \sum_{i,j=1}^{4l+1} \widehat \Phi(2l)_{ij} \mathcal{Y}_{2l}^i(x) 
    \overline{\mathcal{Y}_{2l}^j(y)} . 
  \end{align*}
In other words we loose half of the data needed for the reconstruction, because the experiment (which is measuring PDF $Pf$) 
only gives the even coefficients $\widehat {\Phi}(2l)_{ij}$.  

Since the odd Fourier coefficients
$\widehat {\Phi}(2l+1)_{ij} $  of the function $\Phi(x,y)=\mathcal{R} f(x,y)$ disappear  one cannot reconstruct the function $f(g), g\in SO(3), $ from
$Pf (x,y).$
Note that we have two additional conditions stemming from the fact that $f$ is a
probability distribution function: 
\begin{enumerate}
  \item  $f(g)\geq 0,$ 
  \item $\int_{SO(3)} f(g) dg = 1.$
\end{enumerate}

The second condition  is just a normalization, the first condition is less trivial.

We obviously can reconstruct the even part $f_e(g)$ from the even coefficients
$\widehat{\Phi}(2l)_{ij}$. In our future work we are planning to utilize properties (1) and (2) to obtain some information about the odd component of $f$.

\section{Problem 4: Exact reconstruction of  a bandlimited function $f$  on $SO(3)$ from  a finite number of samples of  $ \mathcal{R} f$}\label{Discrete}

It is clear that in practice one has to face situations described in the Problems 3 and 4. Concerning the Problem 3 we refer to \cite{CKT} where an approximate inverse was found using the language of Gabor frames.
A solution to the Problem 4 will be described in the present section.

Let $B((x,y),r)$ be a metric ball on $S^{2}\times S^{2}$ whose center is $(x,y)$ and
radius is $r$. As it is explained in Appendix there exists 
a natural number $N_{S^{2}\times S^{2}}$, such that  for any sufficiently small $\rho>0$
there exists a set of points $\{(x_{\nu},y_{\nu})\}\subset S^{2}\times S^{2}$ such that:
\begin{enumerate}
\item the balls $B((x_{\nu},y_{\nu}), \rho/4)$ are disjoint,

\item  the balls $B((x_{\nu},y_{\nu}), \rho/2)$ form a cover of $S^{2}\times S^{2}$,

\item  the multiplicity of the cover by balls $B((x_{\nu},y_{\nu}), \rho)$
is not greater than $N_{S^{2}\times S^{2}}.$
\end{enumerate}

Any set of points, which has properties (1)-(3) will be called a metric
$\rho$-lattice.

For an $\omega>0$ let us consider the space ${\bf E}_{\omega}(SO(3))$ of $\omega$-bandlimited functions on $SO(3)$ i.e. the span of all Wigner functions $\mathcal{T}_{ij}^{k}$ with $k(k+1)\leq \omega$.

In what follows $\mathcal{E}_{\omega}(S^{2}\times S^{2})$ will denote  the span in the space $L^{2}(S^{2}\times S^{2})$ of all $\mathcal{Y}_k^i(\xi)\overline{\mathcal{Y}_k^j(\eta)}$  with $k(k+1)\leq \omega$ . 

The goal of this section is to prove the following discrete  reconstruction formula (\ref{Exact}) for functions $f$ in ${\bf E}_{\omega}(SO(3))$, which uses only a \textit{finite} number of samples of $\mathcal{R} f$.

\begin{theorem}(Discrete Inversion Formula)\label{SamplingTh}

There exists a $C>0$ such that for any $\omega>0$, if 
$$
\rho=C(\omega+1)^{-1/2},
$$  
then
for any $\rho$-lattice $\{(x_{\nu}, y_{\nu})\}_{\nu=1}^{m_{\omega}}$ of $S^{2}\times S^{2}$ , there exist positive weights
$$
\mu_{\nu}\asymp \omega^{-2}, 
 $$
 such that for every  function $f$  in $ {\mathbf E}_{\omega}(SO(3))$ the  Fourier coefficients $c_{i,j}^{k}\left( \mathcal{R} f\right)$  of its  Radon transform, i.e.
 $$
  \mathcal{R} f(x,y)=\sum_{i,j,k} c_{i,j}^{k}\left( \mathcal{R} f\right)\mathcal{Y}^{i}_{k}(x)\overline{\mathcal{Y}^{j}_{k}}(y),\>\>\>\>\>\>\>\>\>k(k+1)\leq \omega,\>\>\>\>\>(x,y)\in S^{2}\times S^{2},
 $$
  are given by the formulas 
 \begin{equation}
c_{i,j}^{k}\left(\mathcal{R} f\right)=\sum_{\nu=1}^{m_{\omega}}\mu_{\nu}\left(\mathcal{R} f\right)(x_{\nu},y_{\nu})\mathcal{Y}^{i}_{k}(x_{\nu})\overline{\mathcal{Y}^{j}_{k}}(y_{\nu}),
\end{equation}
and there exists a constant $C$, which is independent on $\omega$ such that
$$
 dim\>\mathcal{E}_{24\omega}(S^{2}\times S^{2})\leq m_{\omega}\leq C dim\> \mathcal{E}_{24\omega}(S^{2}\times S^{2}).
$$
The function $f$ can be reconstructed by means of  the formula
\begin{equation}\label{Exact}
f(g)=\sum_{k}\sum_{i,j}^{2k+1}\frac{(2k+1)}{4\pi}c_{i,j}^{k}\left( \mathcal{R} f\right)\mathcal{T}_{k}^{i,j}(g),\>\>\>\>\>g\in SO(3),
\end{equation}
in which $k$ runs over all natural numbers  such that $k(k+1)\leq \omega$.
\end{theorem}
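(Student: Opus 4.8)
The plan is to reduce everything to a single analytic ingredient --- a positive-weight cubature formula on $S^2\times S^2$ supported on the prescribed $\rho$-lattice and exact on bandlimited functions --- and then to feed the resulting discrete Fourier coefficients into the Reconstruction formula of Theorem~\ref{Recon}. First I would record that $\mathcal{R}$ respects bandlimitedness. Writing $f=\sum_{k(k+1)\le\omega}\sum_{i,j}\widehat f(k)_{ij}\mathcal{T}_{ij}^k$ and using the identity $\mathcal{R}\mathcal{T}_{ij}^k(x,y)=\tfrac{4\pi}{2k+1}\mathcal{Y}_k^i(x)\overline{\mathcal{Y}_k^j(y)}$ established just before the theorem, one gets $\mathcal{R} f\in\mathcal{E}_\omega(S^2\times S^2)$ together with the exact relation $c_{i,j}^k(\mathcal{R} f)=\tfrac{4\pi}{2k+1}\widehat f(k)_{ij}$. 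Since the family $\{\mathcal{Y}_k^i(x)\overline{\mathcal{Y}_k^j(y)}\}$ is orthonormal in $L^2(S^2\times S^2)$, these coefficients are the integrals
\[
c_{i,j}^k(\mathcal{R} f)=\int_{S^2\times S^2}(\mathcal{R} f)(x,y)\,\overline{\mathcal{Y}_k^i(x)}\,\mathcal{Y}_k^j(y)\,dx\,dy .
\]
The integrand is a product of functions that are bandlimited on each spherical factor, and since multiplication of spherical harmonics at most doubles the degree, a direct estimate of the degrees places this integrand in the bandlimited subspace $\mathcal{E}_{24\omega}(S^2\times S^2)$ of bandwidth $24\omega$.

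The heart of the argument is then the construction of weights $\mu_\nu\asymp\omega^{-2}$ at the lattice nodes for which
\[
\int_{S^2\times S^2}\Phi\,dx\,dy=\sum_{\nu=1}^{m_\omega}\mu_\nu\,\Phi(x_\nu,y_\nu)
\]
holds exactly for every $\Phi\in\mathcal{E}_{24\omega}(S^2\times S^2)$. Granting this, I would apply the identity to $\Phi(x,y)=(\mathcal{R} f)(x,y)\,\overline{\mathcal{Y}_k^i(x)}\,\mathcal{Y}_k^j(y)$, which reproduces the asserted discrete expression for $c_{i,j}^k(\mathcal{R} f)$. Substituting these values into the series of Theorem~\ref{Recon} and using $\widehat f(k)_{ij}=\tfrac{2k+1}{4\pi}c_{i,j}^k(\mathcal{R} f)$ delivers the reconstruction (\ref{Exact}). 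The cardinality estimate is then pure geometry: properties (1) and (2) of a $\rho$-lattice give $m_\omega\asymp\rho^{-4}$ (the disjoint balls of radius $\rho/4$ force the lower bound and the cover by balls of radius $\rho/2$ the upper bound), and with $\rho\asymp(\omega+1)^{-1/2}$ this is $\asymp\omega^2\asymp\dim\mathcal{E}_{24\omega}(S^2\times S^2)$, which is precisely the two-sided bound claimed.

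The hard part will be the cubature formula itself, and this is exactly where the scaling $\rho=C(\omega+1)^{-1/2}$ is forced and where the Appendix machinery enters. I would derive it from a Marcinkiewicz--Zygmund / Plancherel--P\'olya stability inequality for $\mathcal{E}_{24\omega}(S^2\times S^2)$, whose proof rests on a Bernstein inequality of the form $\|\nabla\Phi\|\le c\sqrt{\omega}\,\|\Phi\|$. This controls the oscillation of $\Phi$ across a ball of radius $\rho$ by a multiple of $\rho\sqrt{\omega}$, which is small exactly when $\rho$ is a small multiple of $\omega^{-1/2}$; choosing $C$ below the resulting threshold makes the elementary Riemann-sum operator $\Phi\mapsto\sum_\nu|B_\nu|\,\Phi(x_\nu,y_\nu)$ a small perturbation of integration on $\mathcal{E}_{24\omega}$, hence invertible there. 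A duality (or least-squares) argument then upgrades the inverted quadrature to one with strictly positive weights comparable to the cell volumes $|B_\nu|\asymp\rho^4\asymp\omega^{-2}$. Once these stability and lattice facts are in hand, the remainder of the proof --- bandlimitedness of $\mathcal{R} f$, the coefficient identity, and the final series --- is bookkeeping resting on Theorem~\ref{Recon}.
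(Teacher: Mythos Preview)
Your proposal is correct and follows essentially the same route as the paper: reduce to a positive cubature formula on $S^2\times S^2$ exact on $\mathcal{E}_{24\omega}$, observe that $(\mathcal{R} f)\,\mathcal{Y}_k^i\,\overline{\mathcal{Y}_k^j}$ lies in that space, and then feed the discrete Fourier coefficients into Theorem~\ref{Recon}. The only visible difference is in how the bandwidth of the product is bounded. The paper invokes the general product theorem (Theorem~\ref{prodthm}) for compact homogeneous manifolds, with $d=\dim(SO(3)\times SO(3))=6$ yielding the factor $4d=24$; you instead argue directly that multiplying spherical harmonics at most doubles the degree, which is more elementary and in fact sharper (it would place the product in roughly $\mathcal{E}_{4\omega}$), though you then quote $24\omega$ to match the statement. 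Either justification closes the argument; the paper's has the virtue of being group-theoretic and portable, yours of being explicit for the case at hand.
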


\begin{proof}

 As the formulas 

\begin{align}\label{eigenvalues}
    \Delta_{SO(3)} \mathcal{T}_k^{ij}=-k(k+1) \mathcal{T}_k^{ij} ,\>\>\> \Delta_{S^2}\mathcal{Y}_k^i=-k(k+1)\mathcal{Y}_k^i.
\end{align}
 and
 
\begin{align}\label{basis-action1}
   \mathcal{R} \mathcal{T}_k^{ij}(x,y)  =\frac{4\pi}{2k+1} \mathcal{Y}_k^i(x)\overline{\mathcal{Y}_k^j(y)}
\end{align} 
 show the Radon transform of a function $f\in  {\mathbf E}_{\omega}(SO(3))$ is $\omega$-bandlimited on $S^{2}\times S^{2}$ in the sense that its Fourier expansion involves only functions $\mathcal{Y}^{i}_{k}\overline{\mathcal{Y}^{j}_{k}}$ which are eigenfunctions of $\Delta_{S^{2}\times S^{2}}$ with eigenvalue $-k(k+1)$. Let $\mathcal{E}_{\omega}(S^{2}\times S^{2})$ be the span of $\mathcal{Y}_k^i(\xi)\overline{\mathcal{Y}_k^j(\eta)}$ with $k(k+1)\leq \omega$. Thus
 $$
  \mathcal{R}: {\bf E}_{\omega}(SO(3))\rightarrow \mathcal{E}_{\omega}(S^{2}\times S^{2}).
 $$

Let $\{(x_{1}, y_{1}),...,(x_{m}, y_{m})\}$ be a set  of pairs of points in $SO(3)$ and  $\mathcal{M}_\nu=x_\nu SO(2) y_\nu^{-1}$ are corresponding submanifolds of $ SO(3),\>\>\nu=1,...,m$.

For a function $f\in {\bf E}_{\omega}(SO(3))$ and a vector (of measurements) $v=\left(v_{\nu}\right)_{1}^{m}$ where
$$
v_{\nu}=\int_{\mathcal{M}_{\nu}}f,
$$
one has 
$$
  \mathcal{R} f(x_{\nu},y_{\nu})=v_{\nu}.
$$

We are going to find exact formulas for all Fourier coefficients of $ \mathcal{R} f\in \mathcal{E}_{\omega}(S^{2}\times S^{2})$ in terms of a finite set of measurement. 
Since $SO(3)$ has dimension three then according to Theorem \ref{prodthm} (see Appendix) every product 
$
 \left( \mathcal{R} f\right)\mathcal{Y}^{i}_{k}\overline{\mathcal{Y}^{j}_{k}}$, where $k(k+1)\leq \omega$ belongs to   $\mathcal{E}_{\Omega}(S^{2}\times S^{2})$, where $\Omega=4\times 6\omega=24\omega$.

By the Theorem \ref{cubformula} (see Appendix) there exists  a  positive constant $C$,    such  that if  $\rho=C(\omega+1)^{-1/2}$, then
for any $\rho$-lattice $\{(x_{1}, y_{1}),...,(x_{m_{\omega}}, y_{m_{\omega}},)\}$ in $S^{2}\times S^{2}$ 
there exist  a set of positive weights 
$
\mu_{\nu}\asymp \Omega^{-2}
$
such that 
\begin{equation}
c_{i,j}^{k}\left( \mathcal{R} f\right)=\int_{S^{2}\times S^{2}} \left( \mathcal{R} f\right)(x,y)\mathcal{Y}^{i}_{k}(x)\overline{\mathcal{Y}^{j}_{k}}(y) dxdy=
$$
$$
\sum_{\nu=1}^{N}\mu_{\nu}\left( \mathcal{R} f\right)(x_{\nu},y_{\nu})\mathcal{Y}^{i}_{k}(x_{\nu})\overline{\mathcal{Y}^{j}_{k}}(y_{\nu}).
\end{equation}
Thus,
$$
\left( \mathcal{R} f\right)(x,y)=\sum_{\nu}c_{i,j}^{k}\left( \mathcal{R} f\right)\mathcal{Y}^{i}_{k}(x)\overline{\mathcal{Y}^{j}_{k}}(y),
$$
where
\begin{equation}\label{coef}
c_{i,j}^{k}\left( \mathcal{R} f\right)=\sum_{\nu=1}^{N}\mu_{\nu}\left( \mathcal{R} f\right)(x_{\nu})\mathcal{Y}^{i}_{k}(x_{\nu})\overline{\mathcal{Y}^{j}_{k}}(x_{\nu}).
\end{equation}

Now the reconstruction formula of Theorem \ref{Recon} gives our result (\ref{Exact}).

\end{proof}

\section{Appendix}\label{A}

We explain Theorems   \ref{cubformula}  and \ref{prodthm}, which played the key role in the proof of Theorem \ref{SamplingTh}.

\subsection{Positive cubature formulas on compact manifolds}

 We consider a compact connected Riemannian manifold ${\bf M}$. 
Let $B(\xi,r)$ be a metric ball on  ${\bf M}$ whose center is $\xi$ and
radius is $r$. 

It was shown in \cite{Pes00}, \cite{Pes04a}, that if ${\bf M}$ is compact then  there exists
a natural number $N_{{\bf M}}$, such that  for any sufficiently small $\rho>0$
there exists a set of points $\{\xi_{k}\}$ such that:
(1)  the balls $B(\xi_{k}, \rho/4)$ are disjoint;
(2) the balls $B(\xi_{k}, \rho/2)$ form a cover of ${\bf M}$; (3) the multiplicity of the cover by balls $B(\xi_{k}, \rho)$
is not greater than $N_{{\bf M}}.$

Any set of points $M_{\rho}=\{\xi_{k}\}$ which has properties (1)-(3) will be called a metric
$\rho$-lattice.

Let $L$ be an elliptic  second order differential operator on ${\bf M}$, which is self-adjoint and positive semi-definite in the space $L^{2}({\bf M})$ constructed with respect to Riemannian measure. Such operator has a discrete spectrum $0<\lambda_{1}\leq \lambda_{2}\leq .... $ which goes to infinity and does not have accumulation points. Let $\{u_{j}\}$ be an orthonormal system of eigenvectors of $L$, which is complete in $L^{2}({\bf M})$. 

For a given $\omega>0$ the notation $ {\mathbf E}_{\omega}({L})$ will be used for the span of all eigenvectors $u_{j}$ that correspond to eigenvalues not greater than $\omega$.

Now we are going to prove existence of cubature formulas which are exact on $ {\mathbf E}_{\omega}(L)$,
and have positive coefficients of the "right" size.

The following exact cubature formula was established  in \cite{gp}, \cite{pg}.
\begin{theorem} 
\label{cubformula}
There exists  a  positive constant $C$,    such  that if  
\begin{equation}\label{density}
\rho=C(\omega+1)^{-1/2},
\end{equation}
 then
for any $\rho$-lattice $M_{\rho}=\{\xi_{k}\}$, there exist strictly positive coefficients $\mu_{\xi_{k}}>0, 
 \  \xi_{k}\in M_{\rho}$, \  for which the following equality holds for all functions in $ {\mathbf E}_{\omega}({L})$:
\begin{equation}
\label{cubway}
\int_{{\bf M}}fdx=\sum_{\xi_{k}\in M_{\rho}}\mu_{\xi_{k}}f(\xi_{k}).
\end{equation}
Moreover, there exists constants  $\  c_{1}, \  c_{2}, $  such that  the following inequalities hold:
$$
c_{1}\rho^{n}\leq \mu_{\xi_{k}}\leq c_{2}\rho^{n}, \ n=dim\   {\bf M}.
$$
\end{theorem}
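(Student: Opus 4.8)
The plan is to build the exact positive weights as a controlled modification of the naive Riemann-sum weights attached to a partition subordinate to the lattice, with the whole argument powered by a single local estimate for bandlimited functions. First I would use properties (1)--(3) of the $\rho$-lattice $M_{\rho}=\{\xi_{k}\}$ to manufacture a measurable partition $\{\Omega_{k}\}$ of ${\bf M}$ with $B(\xi_{k},\rho/4)\subseteq\Omega_{k}\subseteq B(\xi_{k},\rho/2)$. On a compact manifold small geodesic balls of radius $r$ have volume comparable to $r^{n}$, so the naive weights $a_{k}:=|\Omega_{k}|$ already satisfy $a_{k}\asymp\rho^{n}$, and the elementary quadrature $Q_{\rho}f=\sum_{k}a_{k}f(\xi_{k})$ is the obvious first approximation to $\int_{{\bf M}}f$.

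The analytic engine is a Bernstein inequality combined with a local oscillation bound. Ellipticity, self-adjointness and positivity of $L$ give, via G\aa rding's inequality, $\|f\|_{H^{1}}^{2}\le C\big(\langle Lf,f\rangle+\|f\|^{2}\big)\le C(\omega+1)\|f\|^{2}$ for $f\in{\mathbf E}_{\omega}(L)$, and iterating controls every Sobolev norm, $\|f\|_{H^{s}}\le C_{s}(\omega+1)^{s/2}\|f\|$; this is exactly why the scaling $\rho\sim(\omega+1)^{-1/2}$ is forced. A Poincar\'e/Sobolev estimate on each ball then yields, uniformly in $k$,
\[
\Big|\int_{\Omega_{k}}f\,dx-a_{k}f(\xi_{k})\Big|\le C\,\rho^{\,1+n/2}(\omega+1)^{1/2}\,\|f\|_{L^{2}(B(\xi_{k},\rho))},
\]
and summing over $k$ with Cauchy--Schwarz and the finite multiplicity $N_{{\bf M}}$ gives $\big|\int_{{\bf M}}f-Q_{\rho}f\big|\le C\rho(\omega+1)^{1/2}\|f\|$, a relative error $<1/2$ once $\rho=C(\omega+1)^{-1/2}$ with $C$ small. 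The same circle of ideas applied to the square of $f$ upgrades this into a Marcinkiewicz--Zygmund frame inequality $\tfrac{1}{2}\|f\|^{2}\le\sum_{k}a_{k}|f(\xi_{k})|^{2}\le\tfrac{3}{2}\|f\|^{2}$. I expect making these estimates \emph{genuinely uniform over ${\bf M}$ and pointwise} --- the passage from $L^{2}$-oscillation to control of the actual value $f(\xi_{k})$ --- to be the main obstacle; it is handled by the localization of the bandlimited reproducing kernel $K_{\omega}$ at scale $(\omega+1)^{-1/2}$, whose uniformity rests on the bounded geometry of the compact manifold.

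With the frame inequality in hand, the self-adjoint sampling operator $G:=S^{*}DS$ on the finite-dimensional space ${\mathbf E}_{\omega}(L)$, where $Sf=(f(\xi_{k}))_{k}$ and $D=\mathrm{diag}(a_{k})$, satisfies $\tfrac{1}{2}\|f\|^{2}\le\langle Gf,f\rangle\le\tfrac{3}{2}\|f\|^{2}$ and is invertible. Writing $g_{0}=P_{{\mathbf E}_{\omega}(L)}\mathbf{1}$, so that $\int_{{\bf M}}f=\langle f,g_{0}\rangle$ on ${\mathbf E}_{\omega}(L)$, and setting $h:=G^{-1}g_{0}$, the weights $\mu_{\xi_{k}}:=a_{k}\,\overline{h(\xi_{k})}$ reproduce the integral \emph{exactly}, since $\sum_{k}\mu_{\xi_{k}}f(\xi_{k})=\langle DSf,Sh\rangle=\langle f,Gh\rangle=\langle f,g_{0}\rangle=\int_{{\bf M}}f$. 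It then remains to secure positivity and the two-sided bound $c_{1}\rho^{n}\le\mu_{\xi_{k}}\le c_{2}\rho^{n}$. Since $G=I+E$ with $\|E\|<1/2$ one has $h$ close to $g_{0}$, hence close to the constant $\mathbf{1}$, but converting this $L^{2}$-closeness into the pointwise statement $h(\xi_{k})\asymp 1$ is delicate; more robustly I would replace the linear correction by a maximum-entropy selection, minimizing $\sum_{k}\mu_{\xi_{k}}\big(\log(\mu_{\xi_{k}}/a_{k})-1\big)$ subject to the exactness constraints, whose Karush--Kuhn--Tucker conditions force $\mu_{\xi_{k}}=a_{k}e^{\psi(\xi_{k})}>0$ for a dual function $\psi\in{\mathbf E}_{\omega}(L)$. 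Positivity is then automatic, and the size bound $\mu_{\xi_{k}}\asymp a_{k}\asymp\rho^{n}$ reduces to an $L^{\infty}$ bound on $\psi$, again furnished by the Marcinkiewicz--Zygmund inequality; establishing this uniformly across the many nodes with the exact power $\rho^{n}$ is the real content beyond the exactness identity.
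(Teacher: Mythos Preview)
The paper does not actually prove this theorem: it is stated in the Appendix with the preface ``The following exact cubature formula was established in \cite{gp}, \cite{pg}'' and no argument is given. So there is no in-paper proof to compare against.

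For what it is worth, your outline is essentially the strategy of those cited references: manufacture a partition subordinate to the $\rho$-lattice, derive a Marcinkiewicz--Zygmund (Plancherel--Polya) frame inequality on ${\mathbf E}_{\omega}(L)$ from a Bernstein-type bound at scale $\rho\sim(\omega+1)^{-1/2}$, and then upgrade the naive Riemann weights $a_{k}\asymp\rho^{n}$ to exact positive weights by a variational argument on the finite-dimensional span. The references obtain positivity through a convex-optimization argument rather than your maximum-entropy variant, but the two are close cousins and yours is a legitimate route. The one place your sketch is genuinely thin is the last step: inferring a uniform $L^{\infty}$ bound on the dual function $\psi\in{\mathbf E}_{\omega}(L)$ ``from the Marcinkiewicz--Zygmund inequality'' is not immediate --- you need either a Nikolskii-type inequality $\|\psi\|_{\infty}\lesssim(\omega+1)^{n/4}\|\psi\|_{2}$ combined with an a priori $L^{2}$ bound coming from the KKT system, or a direct contraction argument showing $\psi$ remains uniformly small once the naive quadrature error is below $1/2$. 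You correctly flag this as ``the real content beyond the exactness identity,'' and that is indeed where the cited papers do the work.
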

It is worth to noting that this result is essentially optimal in the sense that (\ref{density}) and Weyl's asymptotic formula $\mathcal{N}_{\omega}(L)\asymp C_{{\bf M}}\omega^{n/2},$ for the number of eigenvalues of $L$ imply  that cardinality of $M_{\rho}$  has the same order as dimension of the space $ {\mathbf E}_{\omega}({L})$.

\subsection{On the product of eigenfunctions of the Casimir operator $\mathcal{L}$ on compact
 homogeneous manifolds}

A homogeneous compact manifold ${\bf M}$ is a
$C^{\infty}$-compact manifold on which a compact
Lie group $\mathcal{G}$ acts transitively. In this case ${\bf M}$ is necessarily of the form $\mathcal{G}/\mathcal{H}$,
where $\mathcal{H}$ is a closed subgroup of $\mathcal{G}$. The notation $L^{2}({\bf M}),
$ is used for the usual Hilbert  spaces
$L^{2}({\bf M})=L^{2}({\bf M},d\xi)$, where $d\xi$ is an invariant
measure.

If $\textbf{g}$ is the Lie algebra of a compact Lie group $\mathcal{G}$ then
(\cite{H3}, Ch.\ II,) it is a direct sum
$\textbf{g}=\textbf{a}+[\textbf{g},\textbf{g}]$, where
$\textbf{a}$ is the center of $\textbf{g}$, and
$[\textbf{g},\textbf{g}]$ is a semi-simple algebra. Let $Q$ be a
positive-definite quadratic form on $\textbf{g}$ which, on
$[\textbf{g},\textbf{g}]$, is opposite to the Killing form. 
Let
$X_{1},...,X_{d}$ be a basis of
$\textbf{g}$, which is orthonormal with respect to $Q$. By using differential of the quasi-regular representation of $\mathcal{G}$ in the space $L^{2}({\bf M})$ one can identify every  $X_{j},\>j=1,...,d,$ with a first-order differential operator $D_{j},\>j=1,...,d,$ in the space $L^{2}({\bf M})$. 
 Since the form $Q$ is $Ad(\mathcal{G})$-invariant, the operator
$
\mathcal{L}=-D_{1}^{2}- D_{2}^{2}- ...- D_{d}^{2}, \>\>\>
            d=dim \ \mathcal{G},\label{Laplacian}
$
commutes with all operators $D_{j},\>j=1,...,d$. 

This elliptic second order differential operator
$\mathcal{L}$ is usually called the Laplace operator. In the case of a compact semi-simple
Lie group, or a compact symmetric space of rank one,  the operator $\mathcal{L}$ is proportional to the
Laplace-Beltrami operator  of an invariant metric on ${\bf M}$.

The following theorem was proved in \cite{gp}, \cite{pg}.

\begin{theorem}
\label{prodthm}
If ${\bf M}=\mathcal{G}/\mathcal{H}$ is a compact homogeneous manifold and $\mathcal{L}$
is defined as in (\ref{Laplacian}), then for any $f$ and $g$ belonging
to ${\mathbf E}_{\omega}(\mathcal{L})$,  their product $fg$ belongs to
${\mathbf E}_{4d\omega}(\mathcal{L})$, where $d$ is the dimension of the
group $\mathcal{G}$.

\end{theorem}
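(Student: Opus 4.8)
The plan is to establish the bound $fg \in {\mathbf E}_{4d\omega}(\mathcal{L})$ by controlling the action of the Laplace operator $\mathcal{L}$ on products via the first-order operators $D_j$ and a Leibniz-type argument. The core idea is that membership in ${\mathbf E}_{\omega}(\mathcal{L})$ is equivalent to a spectral bound $\|\mathcal{L}^{s}u\|_{L^2} \le \omega^{s}\|u\|_{L^2}$ for all $s\ge 0$, so I would aim to prove the product bound in this quantitative spectral form rather than by literally tracking eigenvalue supports. First I would recall that since $f,g \in {\mathbf E}_{\omega}(\mathcal{L})$ are finite linear combinations of eigenfunctions with eigenvalues $\le \omega$, each operator $D_j$ raises the "frequency" in a controlled way: because $\mathcal{L} = -\sum_j D_j^2$ is a sum of squares, one has the Bernstein-type inequality $\sum_j \|D_j u\|^2 = (\mathcal{L}u,u) \le \omega \|u\|^2$ for $u \in {\mathbf E}_{\omega}(\mathcal{L})$.

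The key computational step is the Leibniz rule for the $D_j$, which are first-order (derivation-like) operators arising from the quasi-regular representation: $D_j(fg) = (D_j f)g + f(D_j g)$. Applying $\mathcal{L}$ to the product and expanding gives
\begin{equation*}
\mathcal{L}(fg) = (\mathcal{L}f)g + f(\mathcal{L}g) - 2\sum_{j=1}^{d} (D_j f)(D_j g).
\end{equation*}
The plan is to estimate $\|\mathcal{L}^{s}(fg)\|_{L^2}$ for all $s$ by iterating this identity. The first two terms contribute factors bounded by $\omega$, while the cross terms $\sum_j (D_j f)(D_j g)$ contribute the extra factor; here the Cauchy--Schwarz inequality together with $\sum_j \|D_j f\|^2 \le \omega\|f\|^2$ produces the constant. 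Carrying out the iteration carefully, the number $d$ of generators enters because there are $d$ cross terms, and the factor $4$ arises from combining the bounds on all three groups of terms so that at each application of $\mathcal{L}$ the frequency is multiplied by at most $4d\omega$. This is the argument underlying the spectral characterization: I would show $\|\mathcal{L}^{s}(fg)\| \le (4d\omega)^{s}\|fg\|$ for every $s$, which by the spectral theorem forces $fg$ to lie in the span of eigenfunctions with eigenvalue $\le 4d\omega$, i.e. $fg\in {\mathbf E}_{4d\omega}(\mathcal{L})$.

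The main obstacle I anticipate is making the iteration bookkeeping rigorous: when $\mathcal{L}$ is applied repeatedly, the commutators between $\mathcal{L}$ and the individual $D_j$ must be handled, and one must verify that the cross terms do not accumulate uncontrollably. The crucial structural fact that makes this tractable is that $\mathcal{L}$ commutes with each $D_j$ (guaranteed by the $Ad(\mathcal{G})$-invariance of $Q$, as noted in the construction of $\mathcal{L}$), so $D_j$ maps ${\mathbf E}_{\omega}(\mathcal{L})$ into itself and the frequency bounds propagate cleanly through the iteration. I would exploit this commutation to reduce the estimate to a single clean step and then invoke an elementary induction, rather than tracking an ever-growing tree of terms. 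Once the commutation is used to control each factor $D_j f$ and $D_j g$ as elements of ${\mathbf E}_{\omega}(\mathcal{L})$, the Cauchy--Schwarz bound on the cross terms and the spectral reformulation combine to yield the stated exponent $4d\omega$.
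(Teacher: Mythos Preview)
The paper does not actually prove this theorem; it merely states it and cites the references \cite{gp}, \cite{pg} (Geller--Pesenson) for the proof. So there is no in-paper argument to compare against, and your outline is in fact essentially the argument those references use: Leibniz expansion of $\mathcal{L}(fg)$ through the first-order operators $D_j$, the commutation $[\mathcal{L},D_j]=0$ to keep all factors inside $\mathbf{E}_\omega(\mathcal{L})$, and the spectral characterization of $\mathbf{E}_\Omega(\mathcal{L})$ via growth of $\|\mathcal{L}^s h\|$.

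One point to tighten. You aim for $\|\mathcal{L}^{s}(fg)\|\le (4d\omega)^{s}\|fg\|$, but getting $\|fg\|$ on the right is neither straightforward nor necessary. What the spectral theorem actually requires is the weaker statement that $\|\mathcal{L}^{s}(fg)\|\le (4d\omega)^{s}C$ for all $s$ and \emph{some} constant $C$ depending on $f,g$: if any Fourier coefficient of $fg$ lived at an eigenvalue $\lambda>4d\omega$, the left side would grow like $\lambda^{s}$ and eventually beat $(4d\omega)^{s}C$. This weaker bound drops out cleanly if you rewrite the Leibniz identity as $\mathcal{L}\circ m = m\circ\widetilde{\mathcal{L}}$, where $m:\mathbf{E}_\omega\otimes\mathbf{E}_\omega\to L^2(\mathbf{M})$ is multiplication and $\widetilde{\mathcal{L}}=\mathcal{L}\otimes I+I\otimes\mathcal{L}-2\sum_j D_j\otimes D_j$; then $\|\mathcal{L}^{s}(fg)\|\le \|m\|\,\|\widetilde{\mathcal{L}}\|^{s}\,\|f\|\,\|g\|$ with $\|m\|<\infty$ since $\mathbf{E}_\omega$ is finite-dimensional. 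The operator-norm bound $\|\widetilde{\mathcal{L}}\|\le 2\omega+2d\omega\le 4d\omega$ follows from $\|\mathcal{L}|_{\mathbf{E}_\omega}\|\le\omega$ and $\|D_j|_{\mathbf{E}_\omega}\|\le\sqrt{\omega}$ (the latter from skew-adjointness of $D_j$ and your identity $(\mathcal{L}u,u)=\sum_j\|D_ju\|^2$). This repackaging is exactly the ``single clean step plus induction'' you allude to, and it removes the bookkeeping worry entirely.
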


{\bf Acknowledgment.} We thank  the anonymous referee for encouraging us to improve the original manuscript,  and  Meyer Pesenson for helping us to address some of the referee's concerns.


\bibliographystyle{amsplain}

\end{document}